%

\documentclass{article}

\usepackage{geometry}
\usepackage{hyperref}
\usepackage{amsmath,amssymb,amsthm}
\usepackage{mathtools}

\usepackage{fixltx2e}
\usepackage{hyperref}

\usepackage[active]{srcltx}
\usepackage{graphicx}

\usepackage{amstext} 
\usepackage{array}

\geometry{a4paper}


\newcommand{\dkr}{\mathbf{d}}

\newcommand{\C}{\mathcal{C} }

\newcommand{\BB}{\mathcal{B} }

\newcommand{\U}{\mathcal{U} }
\newcommand{\V}{\mathcal{V} }

\newcommand{\E}{\mathbb{E}^x }
\newcommand{\Rset}{\mathbb{R}}
\newcommand{\RsetN}{\Rset^d}

\newcommand{\Prob}{\mathcal{P}}

\renewcommand{\div}{\mathrm{div}}

\newcommand{\ds}{\displaystyle}

\newtheorem{thm}{\textbf{Theorem}}
\newtheorem{lem}[thm]{\textbf{Lemma}}
\newtheorem{prop}[thm]{\textbf{Proposition}}
\theoremstyle{remark}
\newtheorem{rem}[thm]{\textbf{Remark}}

\newtheorem{exe}[thm]{\textbf{Example}}
\theoremstyle{definition}

\newtheoremstyle{Claim}{}{}{\itshape}{}{\itshape\bfseries}{:}{ }{#1}
\theoremstyle{Claim}

\newcolumntype{C}{>{$}c<{$}}

\begin{document}
\nonstopmode
\markboth{Achdou, Bardi, Cirant}{Mean Field Games models of segregation}

\title{Mean Field Games models of segregation}

\author{Yves Achdou, Martino Bardi and Marco Cirant }

\maketitle

\begin{abstract}
This paper introduces and analyses some models in the framework of Mean Field Games 
 describing  interactions 
 between two   populations motivated by the studies on  urban settlements and 
 residential choice by Thomas Schelling. For static games, a large population limit is proved. For differential games with noise, the
 existence of solutions is established for the systems of partial differential equations of Mean Field Game theory, in the stationary and in the evolutive case. Numerical methods are proposed, with several simulations. In the examples and in the numerical results,
particular emphasis is put on the phenomenon of segregation between the populations.

\end{abstract}

\noindent
{\footnotesize \textbf{AMS-Subject Classification}}. {\footnotesize 91A13, 49N70, 35K55}\\
{\footnotesize \textbf{Keywords}}. {\footnotesize Mean Field Games, multi-populations models, large populations limit, systems of parabolic equations, finite difference methods, segregation.}

\tableofcontents


\section{Introduction} 

The theory of Mean Field Games (MFG, in short)   is a branch of 
Dynamic Games which aims at modeling and analyzing complex decision processes involving a large number of indistinguishable rational agents  who have individually a very small influence on the overall system and are, on the other hand, influenced by the distribution of the other agents.
%
It originated about ten years ago  in the independent work of J. M. Lasry and P.L. Lions, 
Ref. \cite{LasryLions}, and of M.Y. Huang, P. E. Caines and R. Malham{\'e} 
Refs. \cite{HCM:06}, \cite{HCM:07ieee}. In the case of independent noises affecting the agents, the main equations describing MFG are a  Hamilton-Jacobi-Bellman parabolic equation for the value function of the representative agent coupled with a Kolmogorov-Fokker-Planck equation for the density of the population, 
the former backward in time with a terminal condition and the latter forward in time with an initial condition. 
Recently the theory and applications of MFG have been growing very fast: we refer to P.-L. Lions' courses on the site of the  Coll{\`e}ge de France http://www.college-de-france.fr/site/en-pierre-louis-lions/, the lecture notes Refs. \cite{Gueantnotes} and \cite{CardaliaguetNotes}, and the books Refs. \cite{BFY}, \cite{GomesBookEcon}, and \cite{GomesBook}. A major recent breakthrough by Cardaliaguet, Delarue, Lasry, and Lions is the solution of a PDE in the space of probablilty measures, called master equation, which describes MFGs with a common noise affecting all players and allows to prove general convergence results of $N$-person differential games to a MFG as $N\to\infty$,  in a suitable sense.

 The goal of this paper is to propose some models in the framework of Mean Field Games to describe some kinds of interactions 
 between two different populations, each formed by a large number of indistinguishable agents. Such phenomena arise, for instance, in urban settlements, ecosystems, pedestrian dynamics, see, e.g., Refs. \cite{CPT}, \cite{BeGi}, and the references therein. We will focus in particular on models of residential choice possibly leading to  segregated neighborhoods. We are inspired by the pioneering work of the Nobel Prize in Economics Thomas Schelling, Refs. \cite{S71}, \cite{S78}, and some of its developments until recently, see, e.g., Refs. \cite{Zh:04a}, \cite{BruMa}, \cite{BarTas}, \cite{Zh:11}, \cite{GGJ2}, the survey \cite{GGJ}, and the references therein. 
However, different from the sociologic and economic literature where the models are usually discrete in space and time, we propose games continuous in space and either static, for which we derive rigorously the large population limit, or in continuous time, with  the dynamics of each player described by a controlled system affected by noise. In the differential game, the preferences of the players are described by a 
 cost functional integrated in time that each players seeks to minimise. We consider finite horizon problems as well as games with long-time average cost (also 
  called ergodic cost).
 
 Our 
  analytic results are on the existence of solutions to the system of the four PDEs associated to the two-population MFG, with Neumann boundary conditions modelling the boundedness of the city where the agents live. The PDEs are elliptic in the case of ergodic cost, with an additive eigenvalue in each of the two H-J-B equations; the case of several populations was treated by the second author and Feleqi with periodic boundary conditions (i.e., the state space of the agents is a torus, Refs. \cite{BF}, \cite{Feleqi}), and by the third author with Neumann boundary conditions, Ref. \cite{Cirant}. For finite horizon costs, the PDEs 
 are parabolic (two backward and two forward in time) and existence is known for a single population and periodic boundary conditions; we extend it to two populations and Neumann conditions. 
 Uniqueness of solutions holds for a single population under a restrictive monotonicity condition (Ref. \cite{LasryLions}) and  is not expected to hold for several populations. In fact, we provide examples of non uniqueness by showing that the same game can have segregated solutions as well as non-segregated ones, such as uniform distributions of both populations.
 
 One of the most interesting issues about these models is the qualitative behavior of solutions, in particular whether two initially mixed population tend to segregate, i.e., to concentrate in different parts of the city.
 Schelling's most striking discovery was that very moderate preferences for
same-population neighbors at the individual level can lead to complete residential
segregation at the macro level. 
For example, if every agent requires at least half of her
neighbors to belong to the same population, and moves only if the percentage is below this threshold,
the final outcome,
after a sequence of moves, is almost always complete segregation. Nowadays several softwares freely available on the internet allow such simulations and show that segregation eventually occurs, with random initial conditions, even with much milder thresholds, i.e., lower than $1/2$, see,  e.g., NetLogo (http://ccl.northwestern.edu/netlogo/).
Thus Schelling's conclusion was 
that the ``macrobehavior'' in a society may not reflect the ``micromotives''  of its individual
members (Ref. \cite{S78}). His early experiments are considered today among the first prototypes of artificial societies, see, e.g., Ref. \cite{Rauch}.

We study the qualitative behavior of solutions by numerical methods. We use the techniques 
 introduced in MFG with a single population and periodic boundary conditions by the first author, Capuzzo Dolcetta, and Camilli in Refs. \cite{AchdouCapuzzo}, \cite{AchdouPlanning}, and \cite{AchdouCamilli}. We present finite difference schemes for the stationary PDEs associated to ergodic costs as well as for the evolutive backward-forward system of the finite horizon problem. For both cases we show that segregation occurs with low preference thresholds, so Schelling's principle is valid also in our MFG models. We also compare the results for different thresholds, showing that a higher threshold pushes a population to concentrate in a smaller space, and we also observe the instability arising if both populations are rather xenophobic, leading to oscillations in time. 
 Finally we present a 2-d example of pedestrian dynamics with two populations.

More references to the literature on MFG will be given throughout the paper.

The paper is organised as follows. In Section \ref{Schelling}, we propose several forms of cost functionals that reflect the preferences described by Schelling, 
 with variants and generalizations. In Section \ref{staticMF}, we prove a large population limit for the static game, following the method of Lions and Cardaliaguet, Ref. \cite{CardaliaguetNotes}, and give some simple examples of Mean Field equilibria. In Section \ref{differential}, we first introduce a dynamics driven by a stochastic control system, the long-time average cost, and the stationary MFG PDEs associated to them, followed by an example of coexistence of segregated and non-segregated solutions. Then we describe the finite horizon problem, the evolutive MFG PDEs for it, and prove an existence theorem. Section \ref{sec_numstrategies} illustrates the numerical methods for the MFG PDEs. The final Section \ref{sec_numresults} contains several simulations for the stationary and evolutive cases, in 1 and 2 dimensions.

\section{
Static games in continuous space 
 inspired by T. Schelling} 
 \label{Schelling}

In this section, we propose a class of static (one-shot) games with two populations of players whose positions are taken in a bounded set  $\overline{\Omega} \subset \RsetN$. Within each population, all players have the same cost functional to minimize. We choose 
such functionals in a way that reproduces the main features of 
 the classical models of segregated neighborhoods by 
  Schelling, Ref. \cite{S71} and 
 \cite{S78}, 
and of some of their subsequent developments.  We fix a neighborhood $\U(x)$ for each point  $x\in \overline{\Omega}$ and consider the amount of each population living in such neighborhood, $N_1(x), N_2(x)$. 
In the simplest models, 
the utility $U_k$ (= minus the cost) of an individual of the $k$-th species living at the position $x$ depends only on the quantity 
\begin{equation}
\label{s_i}
s_k:=\frac{N_k(x)}{N_1(x)+N_2(x)}
\end{equation}
 and has the shape shown in Figure \ref{utility_plot}, that is,
\begin{equation}
\label{utility}
U_k(s_k):=\left\{
\begin{array}{ll}
\theta_k(s_k-a_k) \qquad & \qquad\text{if } s_k<a_k ,\\
0 & \qquad\text{else,}
\end{array}
\right.
\end{equation}
where $\theta_k>0$ and $0\leq a_k\leq 1$. Here $s_k$ is the percentage of population $k$ living in $\U(x)$ and $a_k$ is a threshold of happiness: if $s_k$ is below it the player of the $k$-th species  at the position $x$ has a negative utility, i.e., a positive cost. 

\begin{figure}
\centering
\caption{\footnotesize The utility function $U_k
$ ($\theta_k
=2$, $a_k
=0.4$).}\label{utility_plot}
    \includegraphics[width=6cm]{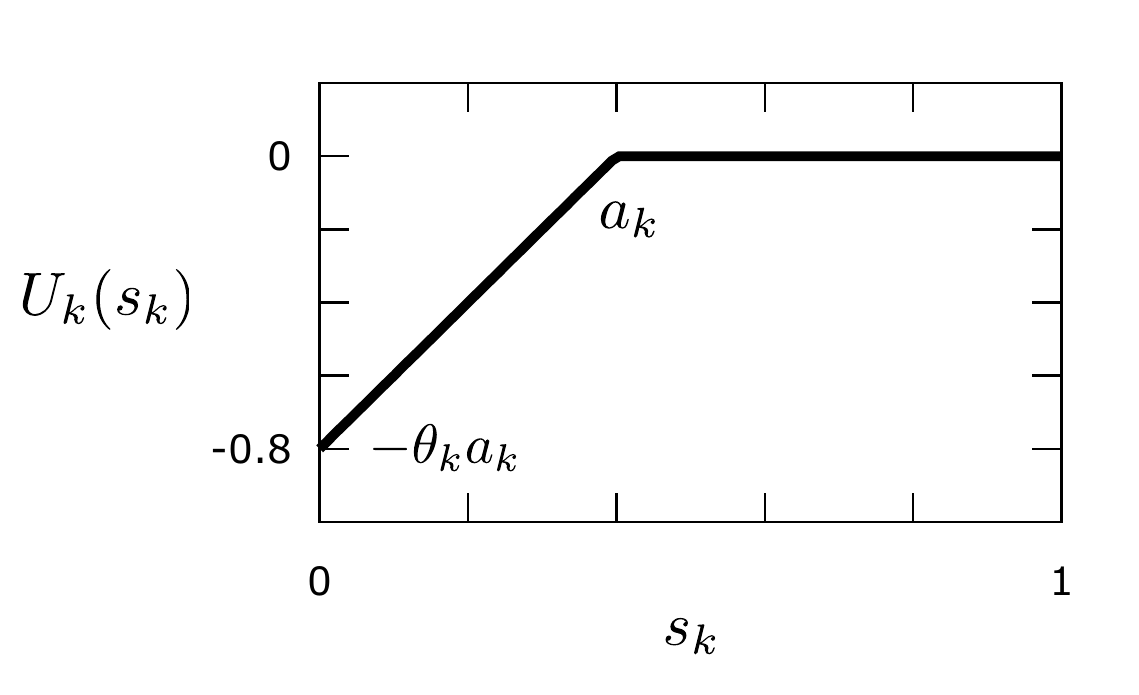}
\end{figure}

In the Schelling's model and in the differential games of Section \ref{differential}, the agent then 
 moves and looks for a location with a higher 
  value of $s_k$, possibly $s_k>a_k$. In the static games of this section, we look for equilibrium distributions of the players that are Nash equilibria for the game of minimizing the individual costs.
    In most of the recent literature the parameter $a_k$ is taken to be 1/2 for both populations, but in Schelling's original examples it is often below 1/2, therefore modeling populations that are not xenophobic and that just do not want that their own group be too small in their neighborhood. 

  The shape of the utility function \eqref{utility} is ``peaked at $a_k$'', as one of those considered in Ref. \cite{BarTas} and a limit case of those in Ref. \cite{Zh:04a}, \cite{Zh:11}; 
  for the slope $\theta_k$ very large it approximates the stair-like utility of Schelling, and for $a_k=1$ it is the linear utility of Ref. \cite{BruMa}; see Ref. \cite{GGJ} for a survey. References \cite{Zh:04a}, \cite{Zh:11}, and \cite{BarTas} consider also utilities decreasing on the right of   $a_k$: although we do not consider these cases in the numerical simulations, they satisfy the same boundedness conditions as our models and therefore fit into our analysis of Sections \ref{staticMF}, \ref{differential}, and \ref{sec_numstrategies}.

  We will consider also more general cost functionals that depend on $N_1(x)$ and $N_2(x)$ separately, not only via $s_k$, and definitions of $N_1(x), N_2(x)$ as measures of the number of individuals weighted by the distance from $x$.
Our assumptions will be general enough to include examples in fields different from residential segregation, such as crowd motion and pedestrian dynamics, see Ref. \cite{CPT} for a general presentation and  Ref. \cite{Lach} and \cite{LW} for Mean-Field Games models with two populations.

\subsection{A basic game with two populations of $
N
$ players.}
\label{basic}

We consider a one-shot 
 game with $2N$ players divided in two populations. The vector $(x_1, \ldots, x_N)$ represents the positions of the players of the first population and $(y_1, \ldots, y_N)$ those of  the players of the second one, where  $x_i, y_i\in \overline{\Omega}$ 
and  $
{\Omega} \subset \RsetN$ is an open and bounded set. We adopt the conventions and notations of Mean-Field Games, see Ref. \cite{LasryLions}, and associate to each player a \textit{cost} (instead of a utility) that the player seeks to \textit{minimise} (instead of maximise), it is denoted with $F^{1,N}_i$ for the $i$-th player of the first population and with $F^{2,N}_i$ for the $i$-th player of the second population.
%
The first kind of cost functionals we propose are 
\begin{multline}
\label{F1}
F^{1,N}_i(x_1, \ldots, x_N, y_1, \ldots, y_N) = \\
\theta_1\left( \frac{\sharp \{x_j \in \U(x_i) : j \neq i\} }{\sharp \{x_j \in \U(x_i) : j \neq i\} + \sharp \{y_j \in \U(x_i) \} + \eta_1(N-1) } - a_1 \right)^-,
\end{multline}
where $\theta_1>0, 0\leq a_1 \leq 1, \eta_1 \geq 0$, $\sharp X$ denotes the cardinality of the (finite) set $X$, $\U(x)$ is some neighborhood of $x$ (for example $B_r(x) \cap \overline{\Omega}$, where $B_r(x)$ is the ball centered at $x$ of radius $r$, or $S_r(x) \cap \overline{\Omega}$, where $S_r(x)$ is the square centered at $x$ of side length $r$), and $(t)^-$ denotes the negative part of $t$, i.e., $(t)^-=-t$ if $t<0$ and $(t)^-=0$ if $t\geq 0$.
As before, $a_1 \in [0,1]$ is the ``threshold of happiness'' of any player of the first population: his cost is null if the ratio of the individuals of his own kind in the neighborhood 
 is above this threshold, whereas the cost is positive with slope $\theta_1$ below the threshold. 
Note that, for  $\eta_1 = 0$ and $U_1, s_1$ defined by \eqref{utility}, \eqref{s_i},
  \[
 F^{1,N}_i
  := -U_1(s_1), \quad N_1(x)=\sharp \{x_j \in \U(x_i) : j \neq i\}, \quad N_2(x)=\sharp \{y_j \in \U(x_i) 
 \} .
 \]
 In the following, however, we will assume $\eta_1 > 0$ (and small) in order to avoid the indeterminacy of the ratio   $s_1$ \eqref{s_i} as $N_1(x)+N_2(x)\to 0$. 
 This assumption makes the cost continuous, 
 and it has the following interpretation: suppose that a player is surrounded just by individuals of his own kind, i.e. $\sharp \{y_j \in \U(x_i) \} = 0$, then the cost he pays is null as long as
\[
 N_1(x)=\sharp \{x_j \in \U(x_i) : j \neq i\} \ge \frac{a_1 \eta_1 }{1-a_1} (N-1) .
\]
But if $ N_1(x)$ becomes too small
 he pays a positive cost (tending to $\theta_1a_1$ as $ N_1(x)\to 0$). 
 This means that it is uncomfortable to live in an almost desert neighborhood. 

We introduce the notation
\begin{equation}\label{eqG}
G(r,s; a, t) := \left( \frac{r}{r+s+t} - a \right)^-,
\end{equation}
and observe that $G: [0, +\infty) \times [0, +\infty) \times [0,1] \times (0, 1) \rightarrow [0, +\infty)$
is a continuous and bounded function of 
 $r, s$ for each $a,t$ fixed.
We rewrite
\begin{multline*}
F^{1,N}_i(x_1, \ldots, x_N, y_1, \ldots, y_N) \\ = \theta_1 G(\sharp \{x_j \in \U(x_i) : j \neq i\}, \sharp \{y_j \in \U(x_i) \}; a_1, \eta_1(N-1)),
\end{multline*}
The cost for each player of the second population is
\begin{multline*}
F^{2,N}_i(x_1, \ldots, x_N, y_1, \ldots, y_N) \\ = \theta_2 G(\sharp \{y_j \in \U(y_i) : j \neq i\}, \sharp \{x_j \in \U(y_i) \}; a_2, \eta_2(N-1)),
\end{multline*}
where $a_2 \in [0,1]$ represents the 
threshold of happiness of this population 
 and $\theta_2, \eta_2 >0$. It has the same form as $F^{1,N}_i$, but the three parameters $a_2, \theta_2, \eta_2$ can be different from $a_1, \theta_1, \eta_1$.

We note that 
the costs depend on the position of the players only via the empirical measures of the two populations. As usual in the theory of Mean-Field Games they 
can be generated by maps over probability measures as follows
\begin{equation}\label{eqwithmass}
F^{1,N}_i(x_1, \ldots, x_N, y_1, \ldots, y_N) = V^{1,N}\left[\frac{1}{N-1}\sum_{i \neq j} \delta_{x_j}, \frac{1}{N}\sum \delta_{y_j} \right](x_i),
\end{equation}
where  $V^{1,N}: \Prob(\overline{\Omega}) \times \Prob(\overline{\Omega})
\rightarrow C(\overline{\Omega} )$ is defined by 
\begin{equation}
\label{Vbase1} 
V^{1,N} [m_1, m_2](x
) := \theta_1 G\left((N-1)\int_{\U(x
)} m_1, N \int_{\U(x
)} m_2; a_1, \eta_1(N-1) \right) ,
\end{equation}
where $ \Prob(\overline{\Omega})$ denotes the set of all probability measures over $\overline{\Omega}$.
In the same way,
\begin{multline}
\label{Vbase2} 
F^{2,N}_i(x_1, \ldots, x_N, y_1, \ldots, y_N) = V^{2,N}\left[\frac{1}{N}\sum \delta_{x_j}, \frac{1}{N-1}\sum_{i \neq j} \delta_{y_j}
 \right](y_i) \\
= \theta_2 G\left((N-1)\int_{\U(y_i)} \frac{1}{N-1}\sum_{i \neq j} \delta_{y_j}, N \int_{\U(y_i)} \frac{1}{N}\sum \delta_{x_j}; a_2, \eta_2 (N-1)\right).
\end{multline}

In the rest of the paper, we will assume 
\[
\theta_1=\theta_2=1 .
\]
This is done merely for simplifying the notations, all the results and proofs of the paper remain valid for any positive values of $\theta_k$.

\subsection{Overcrowding and family effects}
\label{family}


In the discrete model of Schelling, there is a structural impossibility of overcrowding: every player occupies a position in a chessboard, and every slot can host at most 
 one player. In our continuous model, there is no constraint on the local density and the individuals 
  may even concentrate at a single point of the domain. In order to avoid this unrealistic phenomenon,
   we shall introduce an 
   {\em overcrowding} 
    term in the costs $F^{k,N}_i$:
\[
\begin{split}
& \hat{F}^{1,N}_i(x_1, \ldots, y_N) = F^{1,N}_i + C_1 [(\sharp \{x_j \in \U(x_i)\} + \sharp \{y_j \in \U(x_i) \})/(2N) - b_1 ]^+, \\
& \hat{F}^{2,N}_i(x_1, \ldots, y_N) = F^{2,N}_i + C_2 [(\sharp \{x_j \in \U(y_i)\} + \sharp \{y_j \in \U(y_i) \})/(2N) - b_2 ]^+,
\end{split}
\]
for every $i=1,\ldots,N$, so every player starts paying a positive cost when the total number of players in his neighborhood overcomes 
 the threshold $b_k 2N$; thus 
  $b_k \ge 0$ represents the maximum percentage of 
   the whole population that is tolerated at no cost. Here $C_k$ are positive constants, possibly large: when the concentration of players is too high in some regions, the discomfort might be due to 
    overcrowding and not necessarily to an unsatisfactory ratio between the total number of individuals of the two populations (the $F^{k,N}_i$ term).

The maps over probability measures that generate these costs are
\[
\begin{split}
& \hat{V}^1[m_1, m_2](x) := V^1[m_1, m_2](x) + C_1 \left[\int_{\U(x)}\frac{m_1 + m_2}2 - b_1 \right]^+, \\
& \hat{V}^2[m_1, m_2](x) := V^2[m_1, m_2](x) + C_2 \left[\int_{\U(x)}\frac{m_1 + m_2}2 
 - b_2 \right]^+
\end{split}
\]
for the two populations.

\smallskip

Next we take into account that an individual 
 may be influenced also by the opinions 
 of other individuals 
 living around him. 
 A first attempt to model this is adding to the cost of each player the costs paid by the players of his own kind  and very close to him, e.g., by his family, leading to 
\[
\begin{split}
& \overline{F}^{1,N}_i(x_1, \ldots, x_N, y_1, \ldots, y_N) = \frac{1}{N}\sum_{l \, : \, x_l \in \V(x_i)} F^{1,N}_l(x_1, \ldots, x_N, y_1, \ldots, y_N),  \\
& \overline{F}^{2,N}_i(x_1, \ldots, x_N, y_1, \ldots, y_N) = \frac{1}{N}\sum_{l \, : \, y_l \in \V(y_i)} F^{2,N}_l(x_1, \ldots, x_N, y_1, \ldots, y_N) ,
\end{split}
\]
where $\V(x)$ is a neighborhood of $x$ in $\Omega$. 
This can be refined by assuming  that the opinion of other neighbors is 
weighted 
 by a function that depends upon the distance from the individual
\begin{equation}\label{costimedi}
\overline{F}^{k,N}_i(x_1, \ldots, x_N, y_1, \ldots, y_N) = \frac{1}{N} \sum_{l=1}^N F^{k,N}_l(x_1, \ldots, x_N, y_1, \ldots, y_N) W(x_i, x_l),
\end{equation}
$k=1,2$, where $W: \overline{\Omega} \times \overline{\Omega} \rightarrow \Rset$ is 
nonnegative and 
such that $W(x_i, \cdot)$ 
has support in $\V(x_i)$.
Hence, combining  \eqref{eqwithmass} and \eqref{Vbase2} with \eqref{costimedi}, we arrive at
\begin{equation}
\overline{F}^{k,N}_i(x_1, \ldots, x_N, y_1, \ldots, y_N) = 
 \int_{\overline{\Omega}} W(x_i, z
) V^{k,N}(z
) 
\frac{1}{N} \sum_{l=1}^N  \delta_{x_l} (dz) 
,
\end{equation}
where 
$$
V^{1,N}(z):=V^{1,N}\left[\frac{1}{N-1}\sum_{i \neq j} \delta_{x_j}, \frac{1}{N}\sum \delta_{y_j} \right](z) , 
$$$$
V^{2,N}(z):=V^{2,N}\left[\frac{1}{N}\sum \delta_{x_j}, \frac{1}{N-1}\sum_{i \neq j} \delta_{y_j}
 \right](z) .
 $$

\subsection{More regular cost functionals}
\label{more}

The cost functionals proposed so far involve the amount of individuals in a neighborhood of $x$ that can be written as 
\[
\int_{\U(x)} dm_k(y) = \int_{\overline{\Omega}} \chi_{\U(x)}(y) dm_k(y),
\]
where $m_k$ is the empirical measure of the $k$-th population and $\chi_{\U(x)}(\cdot)$ is the indicator function of the set $\U(x)$, i.e., $\chi_{\U(x)}(y)=1$ if $y\in \U(x)$ and $\chi_{\U(x)}(y)=0$ otherwise.
It is useful to consider regularized versions of such integrals 
where $\chi_{\U(x)}(x,y)$ is approximated by a nonnegative smooth kernel $K(\cdot,\cdot)$ 
such that  $K(x,y)=1$ if $y\in \U(x)$ and $K(x,y)=0$ for $y$ out of a small neighborhood of $\U(x)$. 
The 
 cost functionals of Section \ref{basic} are modified to 
\begin{multline}
\label {V1reg}
V^{1,N}\left[m_1, m_2 \right](x) :=\\ G\left((N-1)\int_{\overline{\Omega}} K(x,y) dm_1(y), N \int_{\overline{\Omega}} K(x,y) dm_2(y); a_1, \eta_1(N-1) \right),
\end{multline}
\begin{multline}
\label {V2reg}
V^{2,N}\left[m_1, m_2 \right](x)  :=\\ G\left((N-1) \int_{\overline{\Omega}} K(x,y) dm_2(y), N \int_{\overline{\Omega}} K(x,y) dm_1(y); a_2, \eta_2 (N-1)\right).
\end{multline}
As we will see in the next section, these new functionals are continuous on 
$\Prob(\overline{\Omega})$ endowed with a suitable notion of distance between measures. 

In the present continuous-space setting, they are also more realistic, because individuals near the boundary of  $\U(x)$ still count in the cost but with 
small weights. More generally, $K$ can be a suitable decreasing function of the distance between $x$ and $y$.


\section{Static Mean-Field Games with two populations}
\label{staticMF} 
In this section, we derive a pair of equations in $\Prob(\overline{\Omega})$ that describe the one-shot Mean-Field Game with two populations of players. They are obtained by taking the limit as $N\to \infty$ of Nash equilibria in the game with $N+N$ players. They are the natural extension to two populations of the equation proposed by Lions for a single population in his lectures at the College de France, see Ref. \cite{CardaliaguetNotes}.

In the sequel, we consider $\Prob(\overline{\Omega})$ as a metric space with the Kantorovich-Rubinstein distance\footnote{We recall that $\dkr(\mu, \nu) = \sup\left\{ \int_{\overline{\Omega}} \phi(x) (\mu - \nu) (dx) \, | \, \phi: \overline{\Omega} \to \Rset \text{ is $1$-Lipschitz continuous} \right\}$.} between  two measures $\mu, \nu$ that we denote with $\dkr(\mu, \nu)$, whose topology corresponds to the weak$^*$ convergence of measures (see, e.g., Ref. \cite{CardaliaguetNotes}).

\subsection{The large populations limit}
\label{largepop}
Let $F^{1,N}_1, \ldots, F^{1,N}_N, F^{2,N}_1, \ldots, F^{2,N}_N : \overline{\Omega}^{2N}\to \Rset$ be the cost functions of a 
 game with  two populations of $N$ players each. Suppose that there exist continuous
 $V^1, V^2: \Prob(\overline{\Omega}) \times \Prob(\overline{\Omega})
\rightarrow C(\overline{\Omega} )$ such that, for all $N$ and $i = 1, \ldots, N$,

\begin{align}\label{FV}
F^{1,N}_i(x_1, \ldots, x_N, y_1, \ldots, y_N) = V^{1}\left[\frac{1}{N-1}\sum_{i \neq j} \delta_{x_j}, \frac{1}{N}\sum \delta_{y_j}
 \right](x_i) + o(1) \\ 
 \label{FV2}
F^{2,N}_i(x_1, \ldots, x_N, y_1, \ldots, y_N) = V^{2}\left[\frac{1}{N}\sum \delta_{x_j}, \frac{1}{N-1}\sum_{i \neq j} \delta_{y_j}
 \right](y_i) + o(1),
\end{align}
where $o(1) \rightarrow 0$ as $N \rightarrow \infty$ uniformly with respect to $x_i, y_j$. 

For $(\bar{x}^N_1, \ldots, \bar{x}^N_N, \bar{y}^N_1, \ldots, \bar{y}^N_N)\in\overline{\Omega}^{2N}$, denote the empirical measures with
\[
\bar{m}_1^N := \frac{1}{N}\sum_{j=1}^N \delta_{\bar{x}_j^N}, \quad \bar{m}_2^N := \frac{1}{N}\sum_{j=1}^N \delta_{\bar{y}_j^N}.
\]
The next result is the large population limit of Nash equilibria.
\begin{prop}
\label{nashlimits} 
Assume \eqref{FV}, \eqref{FV2},  and that, for all $N$, $(\bar{x}^N_1, \ldots, \bar{x}^N_N, \bar{y}^N_1, \ldots, \bar{y}^N_N)$ is a Nash equilibrium for the game with cost functions $F^{1,N}_1, \ldots, F^{1,N}_N$, $F^{2,N}_1, \ldots, F^{2,N}_N$. 
Then, up to subsequences, the sequences of measures $(\bar{m}_1^N)$, $(\bar{m}_2^N)$ converge, respectively,  to $\bar{m}_1, \bar{m}_2 \in \Prob(\overline{\Omega})$ such that
\begin{equation}
\label{MFoneshot}
\int_{\overline{\Omega}} V^k[\bar{m}_1, \bar{m}_2](x) d \bar{m}_k(x) = \inf_{\mu \in \Prob(\overline{\Omega})} \int_{\overline{\Omega}} V^k[\bar{m}_1, \bar{m}_2](x) d \mu(x), \quad k=1,2.
\end{equation}
\end{prop}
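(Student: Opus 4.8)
### Proof proposal

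The plan is to follow the standard Lions--Cardaliaguet large-population argument, adapted to two populations. First I would extract convergent subsequences: since $\overline{\Omega}$ is compact, $\Prob(\overline{\Omega})$ is compact for the weak$^*$ topology (equivalently, for $\dkr$), so the sequences $(\bar m_1^N)$ and $(\bar m_2^N)$ have subsequences converging to some $\bar m_1, \bar m_2 \in \Prob(\overline{\Omega})$; along such a subsequence one also has $\frac{1}{N-1}\sum_{j\neq i}\delta_{\bar x_j^N} \rightharpoonup \bar m_1$ and similarly for the second population, since removing one atom and rescaling changes the measure by $O(1/N)$ in $\dkr$, uniformly in $i$. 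By continuity of $V^1, V^2$ (as maps into $C(\overline{\Omega})$), it follows that $V^1[\frac{1}{N-1}\sum_{j\neq i}\delta_{\bar x_j^N}, \frac{1}{N}\sum\delta_{\bar y_j^N}] \to V^1[\bar m_1,\bar m_2]$ uniformly on $\overline{\Omega}$, and likewise for population $2$; combined with \eqref{FV}--\eqref{FV2} this gives $\sup_i \bigl| F^{1,N}_i(\ldots) - V^1[\bar m_1,\bar m_2](\bar x_i^N)\bigr| \to 0$ (and the analogue for $F^{2,N}_i$), which is the key uniform estimate.

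Next I would use the Nash property. Fix $k=1$ and let $\mu\in\Prob(\overline{\Omega})$ be arbitrary. The Nash condition says that for each $i$, player $i$ of population $1$ cannot strictly decrease his cost by a unilateral deviation, so $F^{1,N}_i(\bar x^N,\bar y^N) \le F^{1,N}_i(\bar x^N_1,\ldots,z,\ldots,\bar x^N_N,\bar y^N)$ for every $z\in\overline{\Omega}$. Averaging the left-hand side over $i=1,\ldots,N$ gives $\int_{\overline{\Omega}} V^1[\bar m_1,\bar m_2]\,d\bar m_1^N + o(1)$. For the right-hand side one must be slightly careful: the deviation of player $i$ changes the empirical measure seen by the \emph{other} players too, but that perturbation is $O(1/N)$ in $\dkr$, uniformly, hence by continuity of $V^1$ it contributes only $o(1)$; so $F^{1,N}_i(\ldots,z,\ldots) = V^1[\bar m_1,\bar m_2](z) + o(1)$ uniformly in $i$ and $z$. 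Choosing, for each $i$, a point $z_i$ with $V^1[\bar m_1,\bar m_2](z_i)$ close to $\inf_{\overline{\Omega}} V^1[\bar m_1,\bar m_2] = \inf_{\mu}\int V^1[\bar m_1,\bar m_2]\,d\mu$ (the two infima coincide because the integrand is a fixed continuous function on a compact set), averaging the Nash inequalities over $i$ yields $\int V^1[\bar m_1,\bar m_2]\,d\bar m_1^N \le \inf_{\mu}\int V^1[\bar m_1,\bar m_2]\,d\mu + o(1)$. Passing to the limit along the subsequence, using $\bar m_1^N \rightharpoonup \bar m_1$ against the fixed continuous function $V^1[\bar m_1,\bar m_2]$, gives $\int V^1[\bar m_1,\bar m_2]\,d\bar m_1 \le \inf_\mu \int V^1[\bar m_1,\bar m_2]\,d\mu$. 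The reverse inequality is trivial since $\bar m_1\in\Prob(\overline{\Omega})$, so \eqref{MFoneshot} holds for $k=1$; the case $k=2$ is identical with the roles exchanged.

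I expect the main obstacle to be the careful bookkeeping of the $o(1)$ terms and the uniformity of all estimates in $i$ (and in the deviation point $z$): one must check that (a) the $o(1)$ in \eqref{FV}--\eqref{FV2} is uniform in the arguments (which is assumed), (b) the $\dkr$-distance between $\frac{1}{N-1}\sum_{j\neq i}\delta_{\bar x_j^N}$ and $\frac{1}{N}\sum_j \delta_{\bar x_j^N}$ is $O(1/N)$ independently of $i$, and (c) a single deviation perturbs the measure entering \emph{every} other player's cost by $O(1/N)$, so that by uniform continuity of $V^1$ on the compact set $\Prob(\overline{\Omega})\times\Prob(\overline{\Omega})$ all these errors are genuinely $o(1)$ and can be absorbed. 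Once these uniformities are in place the argument is a routine averaging-and-limit passage; no compactness beyond that of $\overline{\Omega}$ and $\Prob(\overline{\Omega})$ is needed, and no monotonicity assumption enters.
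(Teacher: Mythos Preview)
Your proof is correct and follows essentially the same route as the paper's: extract subsequential limits by compactness of $\Prob(\overline{\Omega})$, use the Nash inequality together with \eqref{FV}--\eqref{FV2} and the uniform continuity of $V^k$ to replace the empirical measures by their limits up to $o(1)$, average over $i$, and pass to the limit (the paper first replaces by $V^k[\bar m_1^N,\bar m_2^N]$ and then lets $N\to\infty$, while you go directly to $V^k[\bar m_1,\bar m_2]$, but this is only a cosmetic reorganisation). One minor simplification: your worry that ``the deviation of player $i$ changes the empirical measure'' is unnecessary, since in \eqref{FV} the measure $\tfrac{1}{N-1}\sum_{j\neq i}\delta_{x_j}$ already excludes $x_i$ and is therefore unchanged when $x_i$ is replaced by $z$; the identity $F^{1,N}_i(\ldots,z,\ldots)=V^1[\bar m_1,\bar m_2](z)+o(1)$ follows directly from \eqref{FV} and the convergence you already established.
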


\begin{proof} By compactness, $\bar{m}_k^N \rightarrow m_k$ as $N \rightarrow \infty$ (up to subsequences); we need to prove that $\bar{m}_k$ satisfy \eqref{MFoneshot}. Let $\epsilon > 0$, for all $N \ge \bar{N}=\bar{N}(\epsilon)$ we have that for all $z \in \overline{\Omega}$, $i = 1, \ldots, N$,
\[
V^{1}\left[\frac{1}{N-1}\sum_{i \neq j} \delta_{\bar{x}^N_j}, \frac{1}{N}\sum \delta_{\bar{y}^N_j}
 \right](\bar{x}^N_i) \le 
V^{1}\left[\frac{1}{N-1}\sum_{i \neq j} \delta_{\bar{x}^N_j}, \frac{1}{N}\sum \delta_{\bar{y}^N_j}
 \right](z) + \epsilon
\]
by definition of Nash equilibrium and \eqref{FV}, so the measure $\delta_{\bar{x}_i^N}$ satisfies for all $\mu \in \Prob(\overline{\Omega})$
\begin{multline*}
\int_{\overline{\Omega}} V^{1}\left[\frac{1}{N-1} \sum_{j \neq i} \delta_{\bar{x}_j^N}, \frac{1}{N} \sum_{j} \delta_{\bar{y}_j^N}\right](x) d \delta_{\bar{x}_i^N} (x) \le \\ 
 \int_{\overline{\Omega}} V^{1}\left[\frac{1}{N-1} \sum_{j \neq i} \delta_{\bar{x}_j^N}, \frac{1}{N} \sum_{j} \delta_{\bar{y}_j^N}\right](x) d \mu(x)+ \epsilon.
\end{multline*}
Since 
$\dkr \left(\frac{1}{N-1} \sum_{j \neq i} \delta_{\bar{x}_j^N}, \bar{m}_1^N \right) \to 0
$, by continuity of $V^1$
\[
\left| V^{1}\left[\frac{1}{N-1} \sum_{j \neq i} \delta_{\bar{x}_j^N}, \frac{1}{N} \sum_{j} \delta_{\bar{y}_j^N}\right](x) - V^1[\bar{m}_1^N, \bar{m}_2^N](x) \right| \le \epsilon
\]
for all $x\in \overline{\Omega}$ and 
$N \ge \bar{N}$, so
\[
\int_{\overline{\Omega}} V^1[\bar{m}_1^N, \bar{m}_2^N](x) d \delta_{\bar{x}_i^N} (x) \le \\ 
 \int_{\overline{\Omega}} V^1[\bar{m}_1^N, \bar{m}_2^N](x) d \mu(x)+ 3\epsilon.
\]
Then we take the sum for $i=1,\dots,N$ and the $\inf_\mu$, divide by $N$ and get
\[
\int_{\overline{\Omega}} V^1[\bar{m}_1^N, \bar{m}_2^N](x) d \bar{m}_1^N (x) \le \\ 
\inf_{\mu \in \Prob(\overline{\Omega})} \int_{\overline{\Omega}} V^1[\bar{m}_1^N, \bar{m}_2^N](x) d \mu(x)+ 3\epsilon.
\]
Using again that continuity of $V^1$, by passing to the limit as $N \rightarrow \infty$ and then $\epsilon\to 0$ we obtain \eqref{MFoneshot} for $k=1$. The argument for $k=2$ is analogous, by using \eqref{FV2} instead of \eqref{FV}.
\end{proof}

\begin{rem} 
The two equations \eqref{MFoneshot} define 
a {\em Mean-Field equilibrium} $(\bar{m}_1, \bar{m}_2)$ for any game with  two populations associated to the functionals $V^1, V^2$. They are easily seen to be equivalent to the equations 
\begin{equation}
\label{MFoneshot2} 
\forall \, x\in \text{supp } \bar m_k \qquad V^k[\bar{m}_1, \bar{m}_2](x) = \min_{z\in \overline{\Omega}} V^k[\bar{m}_1, \bar{m}_2](z) , \quad k=1,2 ,
\end{equation}
see Ref. \cite{CardaliaguetNotes}, Section 2.2, for the case of a single population.
\end{rem}
\begin{rem} 
\label{mixed}
The assumption of existence of a Nash equilibrium for the $N+N$ game in the previous theorem may look restrictive because Nash equilibria may not exist without further assumptions.
However, the classical Nash Theorem guarantees that Nash equilibria exist if we allow players to use \textit{mixed strategies}, i.e., to minimise over elements of $\Prob(\overline{\Omega})$. 
Moreover, all players of the same population use the same cost function, so one can consider Nash equilibria in mixed strategies that are symmetric within each population, as in Section 8 of Ref. \cite{CardaliaguetNotes}. Then one can derive the equations \eqref{MFoneshot} and \eqref{MFoneshot2} via the large population limit by assuming 
$(x,m_1,m_2)\mapsto V^k[m_1,m_2](x)$ both Lipschitz continuous, but not the existence of a Nash equilibrium in pure strategies, following Section 2.3 of
Ref. \cite{CardaliaguetNotes}. 

\end{rem}

\subsection{Examples}
\label{Exa}
Here we show that the models of Section \ref{Schelling} satisfy the assumptions of Proposition \ref{nashlimits} or Remark \ref{mixed} as soon as the the amount of players in a neighborhood is regularized as in Section \ref{more}. This is based on the next simple result.
\begin{lem}
\label{liplip}
If $K : \Rset^d\times\Rset^d\to \Rset$ is Lipschitz continuous, then the map  $\overline{\Omega}\times\Prob(\overline{\Omega}) \to \Rset^d$, $(x,m) \mapsto \int_{\overline{\Omega}} K(x,y) dm(y)$ is Lipschitz continuous.
\end{lem}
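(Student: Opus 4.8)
The plan is to estimate the difference $\int_{\overline{\Omega}} K(x,y)\,dm(y) - \int_{\overline{\Omega}} K(x',y)\,dm'(y)$ by splitting it into two pieces, one measuring the change in the base point $x$ and one measuring the change in the measure $m$, and to bound each using the Lipschitz constant $L$ of $K$ together with the definition of $\dkr$ recalled in the footnote. First I would write
\[
\int_{\overline{\Omega}} K(x,y)\,dm(y) - \int_{\overline{\Omega}} K(x',y)\,dm'(y)
= \int_{\overline{\Omega}} \bigl(K(x,y)-K(x',y)\bigr)\,dm(y) + \int_{\overline{\Omega}} K(x',y)\,(dm-dm')(y).
\]
The first term is bounded in absolute value by $\int_{\overline{\Omega}} |K(x,y)-K(x',y)|\,dm(y) \le L\,|x-x'|$, since $m$ is a probability measure and $K$ is $L$-Lipschitz (in particular $L$-Lipschitz in its first argument uniformly in the second).

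For the second term, the key observation is that $y \mapsto K(x',y)$ is $L$-Lipschitz, so $\frac{1}{L}K(x',\cdot)$ is $1$-Lipschitz continuous on $\overline{\Omega}$; hence by the definition of the Kantorovich--Rubinstein distance,
\[
\left| \int_{\overline{\Omega}} K(x',y)\,(dm-dm')(y) \right| \le L\,\dkr(m,m').
\]
Combining the two estimates gives
\[
\left| \int_{\overline{\Omega}} K(x,y)\,dm(y) - \int_{\overline{\Omega}} K(x',y)\,dm'(y) \right| \le L\bigl(|x-x'| + \dkr(m,m')\bigr),
\]
which is exactly Lipschitz continuity of the map $(x,m)\mapsto \int_{\overline{\Omega}} K(x,y)\,dm(y)$ with respect to the product metric (the range is really $\Rset$ here, the componentwise argument covering the $\Rset^d$-valued case stated).

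There is no serious obstacle: the only point requiring a little care is that the test function in the definition of $\dkr$ must be exactly $1$-Lipschitz, which forces the rescaling by $1/L$ and the reappearance of the factor $L$; one should also note the harmless edge case $L=0$, where $K$ is constant and the statement is trivial. If one wants the componentwise ($\Rset^d$-valued) formulation literally, apply the scalar argument to each coordinate $K_j$ of $K$ and use that a Lipschitz constant for $K$ dominates the Lipschitz constants of its components.
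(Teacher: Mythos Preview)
Your proof is correct and follows essentially the same approach as the paper: both split into Lipschitz continuity in $x$ (immediate from the Lipschitz property of $K$ in its first argument) and in $m$ (obtained by rescaling $K(x',\cdot)$ by $1/L$ to make it $1$-Lipschitz and then invoking the definition of the Kantorovich--Rubinstein distance). Your treatment is slightly more detailed, including the explicit combined estimate and the remarks on the trivial case $L=0$ and on the $\Rset^d$-valued formulation.
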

\begin{proof} The Lipschitz continuity in $x$ is immediate. For the Lipschitz continuity in $m$ we observe that, if $L$ is a Lipschitz constant for  $K(x,\cdot)$, then $y\mapsto K(x,y)/L$ has Lipschitz constant 1, so by the very definition of Kantorovich-Rubinstein distance
\begin{equation*}\label{continuity}
\left| \int_{\overline{\Omega}} K(x,y) d(m(y) - \mu(y)) \right| = 
L \left| \int_{\overline{\Omega}} \frac{K(x,y)}{L}
d(m(y) - \mu(y)) \right| \leq L\dkr(m,\mu) .
\end{equation*}
\end{proof}
\begin{exe}[The basic game]
\label{exabas}
We consider the game with $N+N$ players and cost functions 
\begin{eqnarray*}
F^{1,N}_i(x_1, \ldots, x_N, y_1, \ldots, y_N) = V^{1,N}\left[\frac{1}{N-1}\sum_{i \neq j} \delta_{x_j}, \frac{1}{N}\sum \delta_{y_j} \right](x_i), \\
F^{2,N}_i(x_1, \ldots, x_N, y_1, \ldots, y_N) = V^{2,N}\left[\frac{1}{N}\sum_{j} \delta_{x_j}, \frac{1}{N-1}\sum_{j \neq i} \delta_{y_j} \right](y_i),
\end{eqnarray*}
where  $V^{k,N}$ are the regularized functionals \eqref{V1reg} and \eqref{V2reg} with $K\geq 0$ and Lipschitz, 
and $G$ 
is defined  by (\ref{eqG}).
Since $G(\gamma r,\gamma s; a, t) = G(r,s; a, \gamma^{-1}t)$ for all $\gamma \neq 0$,
\[
V^{k,N}\left[m_1, m_2 \right](x) = G\left(\int_{\overline{\Omega}} K(x,y) dm_k(y), \frac{N}{N-1} \int_{\overline{\Omega}} K(x,y) dm_{-k}(y); a_k, \eta_k \right) .
\]
Moreover, for $\eta_i>0$, $G$ is 
Lipschitz continuous in the first two entries, 
so we can pass to the limit as $N\to \infty$ 
and get  \eqref{FV} and  \eqref{FV2} with 
\begin{equation}
\label{Vk}
V^{k}\left[m_1, m_2 \right](x) := G\left(\int_{\overline{\Omega}} K(x,y) dm_k(y), \int_{\overline{\Omega}} K(x,y) dm_{-k}(y); a_k, \eta_k \right),
\end{equation}
where $m_{-1}=m_2$ and $m_{-2}=m_1$.
Furthermore, $(x,m_1,m_2)\mapsto V^k[m_1,m_2](x)$ are Lipschitz continuous by Lemma \ref{liplip}. Then Proposition \ref{nashlimits} applies to this example if there are Nash equilibria in pure strategies for the $N+N$ game, and in general Remark \ref{mixed} applies.
\end{exe}
\begin{exe}[Games with family effects]
\label{exafamily}
Here we take the cost functionals with ``family effects'' of Section \ref{family} and we regularize them as in Section \ref{more}, i.e.,  $V^{k,N}(x)$ are the regularized functionals \eqref{V1reg} and \eqref{V2reg}  as in the preceding example and we consider
\begin{equation}
\overline{F}^{k,N}_i(x_1, \ldots, x_N, y_1, \ldots, y_N) = \frac{1}{N} \sum_{l=1}^N V^{k,N}(x_l) W(x_i, x_l) ,
\end{equation}
where $W : \Rset^d\times\Rset^d\to \Rset$ is Lipschitz continuous. 
In this case, \eqref{FV} and  \eqref{FV2}  are satisfied by
\begin{equation}
\label{Vbarsmoothened}
\overline{V}^k[{m}_1, {m}_2](x) := \int_{\overline{\Omega}} W(x, z) V^{k}[{m}_1, {m}_2](z) d {m}_k (z)
\end{equation}
and $(x,m_1,m_2)\mapsto \overline{V}^k[m_1,m_2](x)$ are Lipschitz continuous as in the previous example.

Note that the functionals $V^k$ and $\overline{V}^k$ have a remarkably different behavior in areas where both populations are rare. In fact, assume that at some point $\bar x$ both $\int_{\overline{\Omega}} K(\bar x,y) dm_k(y)=0$ and, e.g, $\int_{\overline{\Omega}} W(\bar x,z) dm_1(z)=0$. Then
\[
{V}^1[{m}_1, {m}_2](\bar x)=a_1=\max G , \qquad \overline{V}^1[{m}_1, {m}_2](\bar x)=0 = \min G .
\]
\end{exe}
%
%
\subsection{Some explicit Mean-Field equilibria}
In this section, we give two simple examples of pairs $(\bar{m}_1, \bar{m}_2)\in  \Prob(\overline{\Omega})\times \Prob(\overline{\Omega})$ that satisfy the Mean-Field equations \eqref{MFoneshot2} (or, equivalently,  \eqref{MFoneshot}) for the basic game of Example \ref{exabas}. 
\begin{exe}[Uniform distributions]
In addition to the assumptions of Example \ref{exabas}, suppose that
\begin{equation}
\label{Kconst}
 \int_{\overline{\Omega}} K(x,y)\, dy = c \quad \text{ does not depend on } x.
\end{equation}
This says that the kernel $K$ gives the same total weight to the neighborhood $\U(x):=\text{supp} K(x,\cdot)$ of $x$, for all $x\in \overline{\Omega}.$
Consider the uniform distributions
\[
\bar{m}_1(x) = \bar{m}_2 (x) = 
1/{|\Omega|} \qquad \forall \, x\in  \overline{\Omega} ,
\]
where $|\Omega|$ denotes the measure of $\Omega$. Observe that, by \eqref{Kconst}, $V^k[\bar{m}_1, \bar{m}_2](x)$ is constant.
 Then the pair $(\bar{m}_1, \bar{m}_2)$ solves  \eqref{MFoneshot2} and therefore it is a Mean-Field equilibrium. Note that this occurs for all values of the parameters $
 a_k, \eta_k$, and that the ``value of the game'' $V^k[\bar{m}_1, \bar{m}_2](x)$ is not necessarily 0 (e.g., for $a_k\geq 1/2, \eta_k>0$).
\end{exe}
\begin{exe}[Fully segregated solutions]
 In addition to the assumptions of Example \ref{exabas}, we suppose now that, for some $r>0$,
 \begin{equation}
\label{suppKt}
\text{supp} K(x,\cdot) \subseteq \{z : |z-x|\leq r\}
\end{equation}
and $a_1, a_2<1$. We consider two sets $\Omega_1, \Omega_2 \subseteq \Omega$ such that 
\[
\text{dist}(\overline{\Omega}_1, \overline{\Omega}_2)\geq r , \qquad  \int_{\overline{\Omega}_k} K(x,y)\, dy\geq c_k>0 \quad\forall \,x\in \overline{\Omega}_k, \;k=1,2.
\]
 The second condition means that $\overline{\Omega}_k$ has enough weight near $x$ for all $x\in \overline{\Omega}_k$. We consider the distributions
 \begin{equation}
\label{segreg}
\bar{m}_1(x) = \left\{
\begin{array}{ll}
1/{|\Omega_1|} \quad & \quad\text{if } x\in\Omega_1 ,\\
0 & \qquad\text{else,}
\end{array}
\right.
\qquad \bar{m}_2(x) = \left\{
\begin{array}{ll}
1/{|\Omega_2|} \quad & \quad\text{if } x\in\Omega_2 ,\\
0 & \qquad\text{else.}
\end{array}
\right.
\end{equation}
In order to check 
  \eqref{MFoneshot2}, we first pick $x\in \text{supp }  \bar{m}_1=\Omega_1$. By \eqref{suppKt} and the first property of $\Omega_k$ we have
  \[
 \frac{ \int_{\overline{\Omega}} K(x,y)\, d\bar{m}_1(y)}{\int_{\overline{\Omega}} K(x,y)\, d\bar{m}_1(y) + \int_{\overline{\Omega}} K(x,y)\, d\bar{m}_2(y)+\eta_1}= 1/\left(1 + \frac{\eta_1 |\Omega_1|}{\int_{\overline{\Omega}_1} K(x,y)\, dy} \right),
  \]
and the right-hand side is above or equal to the threshold $a_1$ if and only if 
\[
\eta_1 |\Omega_1|\leq \int_{\overline{\Omega}_1} K(x,y)\, dy \left(\frac 1 {a_1}-1\right),
\]
which is true for all $x\in \Omega_1$ if 
\[
\eta_1 |\Omega_1| \frac{a_1}{1-a_1}\leq c_1 .
\]
Then for such values of the parameters $V^1[\bar{m}_1, \bar{m}_2](x) = 0$, so the first equation  \eqref{MFoneshot2} is satisfied. Similarly, if 
$    \eta_2 |\Omega_2| 
{a_2}/({1-a_2})\leq c_2$,  
for 
$x\in \text{supp }  \bar{m}_2=\Omega_2$ we have $V^2[\bar{m}_1, \bar{m}_2](x) = 0$ and also the second equation  \eqref{MFoneshot2} is verified. 
 Therefore we have a large set of parameters for which any segregated solution of the form \eqref{segreg} is a Mean-Field equilibrium.
 \end{exe}
 %
 \subsection{
 Models with 
  myopic players} 
  \label{locallimits}
In connection with the differential Mean-Field games  of the next sections, it is interesting to consider models where the cost functionals $V^k[m_1,m_2](x)$ depend only on $(m_1(x),m_2(x))$. This makes sense only if the measures $m_k$ have a density, and it is a limit case that does not meet the regularity conditions of Section \ref{largepop}. 
 We derive such local versions of the cost functionals by letting the size of the neighborhoods $ \U(x) $ tend to 0. This corresponds to individuals 
 who compute their cost functional by looking only at a very short distance, that we call {\em myopic players}.

Suppose 
that the kernel $K$ in Section \ref{more} 
 takes the form
\[
K(x,y) =  \rho^{-d} \varphi\left(\frac{x-y}{\rho}\right) 
\]
where 
$\varphi$ is a mollifier (i.e., a smooth nonnegative function $\Rset^d\to \Rset$ with support the unit ball centered at 0 and $\int_{\Rset^d} \varphi(z) dz=1$).
If $m \in L^1(\Omega)$, $\lim_{\rho\to 0}\int K(x,y) dm(y) =
m(x)$ for a.e. $x$. 

Consider first the functionals $V^k$ associated to the basic game (in the large population limit) defined by \eqref{Vk} in Example \ref{exabas}. Then
\begin{multline*}
\lim_{\rho\to 0}\ V^k[m_1, m_2](x) = 
G(m_k(x), m_{-k}(x); a_k, \eta_k) \\ = \left( \frac{m_k(x)}{m_k(x)+m_{-k}(x)+\eta_k} - a_k \right)^- =: V^k_{\ell}[m_1, m_2](x) .
\end{multline*}

Next we consider the game with family effects of Example \ref{exafamily} and assume 
 the kernel $W$ in \eqref{Vbarsmoothened} is also of the form
\[
W(x,y) =
 r^{-d} \psi\left(\frac{x-y}{r}\right) 
 \]
where $\psi$ is a mollifier. In the functionals $\overline{V}^k$ 
defined by \eqref{Vbarsmoothened}, we let first $r \rightarrow 0$ and get
\[
\lim_{r\to 0} \overline{V}^k [m_1, m_2](x) = m_k(x) V^k[m_1, m_2](x) .
\]
This a partially local model that can be interesting in some cases, but we do not study it further in this paper. 
Finally, we let 
 $\rho \rightarrow 0$ and obtain the local version of  $\overline{V}^k$:
\[
\lim_{\rho\to 0} \lim_{r\to 0} \overline{V}^k [m_1, m_2](x) = m_k(x) V^k_{\ell}[m_1, m_2](x)=: \overline{V_{\ell}}^k[m_1, m_2](x) .
\]
%

\section{Mean-field differential game models of segregation}
\label{differential}


\subsection{
Long-time average cost functionals}

In the last section, we designed some one-shot mean field games 
inspired by the original ideas of the population model by T. Schelling. We obtained the averaged costs $V^k, \overline{V}^k$ 
 by taking the limits as $N \rightarrow \infty$ of Nash equilibria of one-shot games with $2N$ players, and then the local limits $V^k_{\ell}, \overline{V}^k_{\ell}$ by shrinking the neighborhoods to points. We shall now investigate 
 dynamic  mean field games with the same cost functionals in a differential context. 
 We consider the state of 
 a representative agent of the $k$-th population governed by 
  the controlled stochastic differential equation with reflection
\begin{equation}\label{sdeXk}
dX_s^k=\alpha_s^k ds +\sqrt{2\nu} \, dB^k_s - n(X^k_s) dl^k_s,
\end{equation}
where  $B^k_s$ is a standard $d$-dimensional Brownian motion defined on some probability space, $\alpha_s^k$ is a control process adapted to $B^k_s$,
$n(x)$ is the outward normal to the open set $\Omega$ at the point $x\in\partial\Omega$, and the local time $l^k_s = \int_0^s \chi_{\partial \Omega} (X^k_s) dl^k_s$ is a non-decreasing process adapted to $B^k_s$. The term $n(X^k_s) dl^k_s$ 
 in the stochastic differential equation prevents the state variable $X^k_s$ to escape from $\overline{\Omega}$ by reflecting it when it reaches the boundary.

The goal of a player  of the $k$-th population is 
minimizing the long-time average cost, also called {\em ergodic cost},
\begin{equation}
\label{longtimeJ}
J^k(X^k_0, \alpha^1, \alpha^2, m_1, m_2) = \liminf_{T \rightarrow +\infty} \frac{1}{T} \mathbb{E} \left[\int_0^T L(X^k_s,\alpha^k_s) + V^k[m_1, m_2](X^k_s) ds \right],
\end{equation}
where $m^k$ are the 
 distributions of the two populations and $L$ is a Lagrangian function (smooth and convex in its second entry) 
 which represents the cost paid by the player for using the control $\alpha^k_s$ at the position $X^k_s$. 

The equilibrium distributions $m_k$ satisfy, together with $\lambda_k \in \Rset$ and the functions  $u_k$,  the stationary MFG system  of two Hamilton-Jacobi-Bellman  and two Kolmogorov-Fokker-Planck equations
\begin{equation}\label{MFGstat}
\left\{
\begin{array}{ll}
- \nu \Delta u_k + H(x,Du_k) + \lambda_k = V^k[m_1, m_2](x) & \text{in $\Omega$, $k=1,2$}\\
- \nu \Delta m_k - \div(D_p H(x,Du_k) m_k) = 0, \\
\partial_n u_k = 0, \quad \nu \partial_n m_k + m_k D_p H^k(x,Du_k)) \cdot n = 0, & \text{on $\partial \Omega$,}
\end{array}
\right.
\end{equation}
where the Hamiltonian $H$ is the Legendre transform of $L$ with respect to the 2nd entry, $\lambda_k$ 
 is the (constant) value of the representative agent of the $k$-th population, and the solutions $u_k$ of the H-J-B 
 equations provide the optimal strategies in feedback form $-D_p H(\cdot,Du_k(\cdot))$. 
Here the costs $V^k$ might be replaced by $\overline{V}^k$ or by the local versions $V^k_{\ell}$ and $\overline{V}^k_\ell$ defined in the previous section. 
 The connection between systems like \eqref{MFGstat}
and 
stochastic differential games  with $N$ players having the same dynamics and individual costs, as $N\to\infty$, was discovered by Lasry and Lions  Ref. \cite{LasryLions} in the periodic setting for a single population, and extended to several populations and more general data in Ref.  \cite{Feleqi} and to Linear-Quadratic problems in Ref. \cite{BP}, see also Ref. \cite{HCM:06} for related results by different methods.

Existence for \eqref{MFGstat} can be proved by means of fixed-point arguments when the cost functionals are bounded.

\begin{thm}\label{statexistence}
Let $\Omega$ be a convex domain. Suppose that $H(x,p) = R|p|^\gamma - H_0(x)$, where $R > 0, \gamma > 1$, $H_0 \in C^2 (\overline{\Omega})$ and $\partial_n H_0 \ge 0$ on $\partial \Omega$. Then, there exists at least one solution $(u_k,\lambda_k,m_k) \in C^{1, \delta}(\overline{\Omega}) \times \Rset \times W^{1,p}(\Omega)$ to \eqref{MFGstat} with costs either $V^k$, or $\overline{V}^k$, or $V^k_{\ell}$, $k=1,2$.
\end{thm}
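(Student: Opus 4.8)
The plan is to prove existence by a Schauder fixed-point argument on the pair of measures $(m_1,m_2)$. First I would fix $(\mu_1,\mu_2)\in\Prob(\overline\Omega)\times\Prob(\overline\Omega)$ and consider, for each $k=1,2$, the single-population stationary ergodic HJB problem
\[
-\nu\Delta u_k+R|Du_k|^\gamma=H_0(x)+V^k[\mu_1,\mu_2](x)-\lambda_k\quad\text{in }\Omega,\qquad \partial_n u_k=0\text{ on }\partial\Omega.
\]
Since $V^k[\mu_1,\mu_2]$ is bounded uniformly in $(\mu_1,\mu_2)$ (by \eqref{eqG}, $0\le V^k\le a_k\le 1$; likewise $\overline V^k$ and $V^k_\ell$ are bounded), the right-hand side lies in a fixed $L^\infty$ ball, and the classical theory of ergodic HJB equations with Neumann boundary conditions on a smooth bounded domain (as in Ref. \cite{Cirant}) gives a unique $\lambda_k\in\Rset$ and a solution $u_k$, unique up to an additive constant, with a priori bounds: $|\lambda_k|\le C$, and—using the convexity of $\Omega$, $\partial_n H_0\ge 0$, and Lipschitz/gradient estimates for this class of Hamiltonians—a uniform bound $\|u_k\|_{C^{1,\delta}(\overline\Omega)}\le C$ for some $\delta\in(0,1)$, with $C$ independent of $(\mu_1,\mu_2)$. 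Normalizing, say, by $\int_\Omega u_k=0$ fixes $u_k$ uniquely.

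Next, with $u_k$ in hand, the drift $b_k:=-D_pH(x,Du_k)=-\gamma R|Du_k|^{\gamma-2}Du_k$ is a fixed bounded (indeed $C^{0,\delta}$) vector field, and I would solve the linear Kolmogorov-Fokker-Planck equation
\[
-\nu\Delta m_k-\div(D_pH(x,Du_k)m_k)=0\ \text{ in }\Omega,\qquad \nu\partial_n m_k+m_kD_pH(x,Du_k)\cdot n=0\ \text{ on }\partial\Omega,
\]
for a probability density $m_k$. Existence and uniqueness of a nonnegative solution with $\int_\Omega m_k=1$ follows from the Krein-Rutman / Fredholm theory for this degenerate-coefficient linear operator (the zero-flux Neumann condition makes constants the kernel of the adjoint, so the Fredholm alternative yields a one-dimensional kernel spanned by a nonnegative function), and elliptic regularity with $C^{0,\delta}$ coefficients gives $m_k\in W^{1,p}(\Omega)$ for all $p<\infty$, with $\|m_k\|_{W^{1,p}}\le C$ depending only on the (uniform) bound on $\|u_k\|_{C^{1,\delta}}$. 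This defines a map $\Phi:(\mu_1,\mu_2)\mapsto(m_1,m_2)$.

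Finally I would apply Schauder's theorem. The natural setup is to take the convex set $\mathcal K=\Prob(\overline\Omega)^2$ with the weak$^*$ (equivalently $\dkr$) topology, which is compact and convex; $\Phi$ maps $\mathcal K$ into the subset of pairs of probability measures whose densities are bounded in $W^{1,p}$, which is relatively compact in $C(\overline\Omega)$ and a fortiori in $\dkr$. Continuity of $\Phi$ is the crux: if $\mu_j^n\to\mu_j$ in $\dkr$, then $V^k[\mu_1^n,\mu_2^n]\to V^k[\mu_1,\mu_2]$ uniformly (Lipschitz/continuous dependence, Lemma \ref{liplip} and the continuity of $G$), hence the HJB solutions converge in $C^{1,\delta'}$ for $\delta'<\delta$ (by the a priori estimates plus uniqueness, via a compactness-and-uniqueness argument, including convergence of the $\lambda_k^n$), hence the drifts converge uniformly, hence the FKP solutions converge in $C(\overline\Omega)$ (again by estimates plus uniqueness of the normalized solution). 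A fixed point $(m_1,m_2)=\Phi(m_1,m_2)$, together with the associated $(u_k,\lambda_k)$, is the desired solution, and the regularity $u_k\in C^{1,\delta}$, $m_k\in W^{1,p}$ is built into the construction; the same argument works verbatim with $V^k$ replaced by $\overline V^k$ or $V^k_\ell$ since only their boundedness and continuity in $(m_1,m_2)$ were used.

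I expect the main obstacle to be the a priori $C^{1,\delta}$ estimate for the ergodic HJB equation uniform in the data, together with establishing that $\lambda_k$ depends continuously on $(\mu_1,\mu_2)$: these rely on Bernstein-type gradient bounds for $R|p|^\gamma$ Hamiltonians under Neumann conditions on a convex domain (where the sign condition $\partial_n H_0\ge0$ and convexity of $\Omega$ enter to control boundary terms), and on the fact that the additive eigenvalue of the ergodic problem is characterized uniquely—points for which I would invoke the results of Ref. \cite{Cirant}. The FKP step and the Schauder packaging are comparatively routine.
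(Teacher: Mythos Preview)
The paper's own proof is simply a citation to Theorem~6 of Ref.~\cite{Cirant}; your outline is precisely the Schauder fixed-point scheme that that reference carries out, so the approaches coincide. One small gap worth flagging: in the local case $V^k_\ell$, you cannot take the fixed-point domain to be $\Prob(\overline\Omega)^2$ with the weak$^*$ topology, since $V^k_\ell[m_1,m_2]$ is only defined for measures with densities and is not weak$^*$-continuous (Lemma~\ref{liplip} does not apply); instead one runs Schauder on a closed convex set of densities in, e.g., $C(\overline\Omega)^2$ or $C^{0,\alpha}(\overline\Omega)^2$, which the uniform $W^{1,p}$ bound on the image of $\Phi$ makes invariant and compact, and on which $(m_1,m_2)\mapsto V^k_\ell[m_1,m_2]$ is continuous into $C(\overline\Omega)$.
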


\begin{proof} See Ref. \cite{Cirant}, Theorem 6
. \end{proof}
The case of local costs $\overline{V}^k_\ell$ in dimension $d > 1$ does not fit into the existence theorem because $\overline{V}^k_\ell$ is unbounded and a-priori estimates on solutions might fail in general. For space dimension $d = 1$ see Ref. \cite{CirantTesi}, Proposition 4.6. We do not expect uniqueness of the solution to the system \eqref{MFGstat}. 

For non-local $V^k$, $\overline{V}^k$ solutions can be proved to be classical and existence holds under weaker assumptions (see Theorem 4 
 in Ref. \cite{Cirant}), provided
 the negative part $(\cdot)^-$ in $G$ is replaced by some smooth regularization. 
We are interested in 
 qualitative properties of $m_1, m_2$, but no methods in this direction are known so far for solutions of PDE systems like  \eqref{MFGstat}. For such a reason, a numerical analysis will be carried out in Section \ref{sec_numresults}.

\subsubsection{The deterministic case in one space dimension}
\label{deterministiccase}

In order to convince ourselves that segregation phenomena might occur also in our differential MFG models, we briefly analyze the deterministic case $\nu=0$ in space dimension $d=1$. Suppose that the state space is a closed interval $\overline{\Omega} = [a,b] \subset \Rset$ and that there is no Brownian motion perturbing the dynamics of the average players ($\nu = 0$). Suppose also that $H(x,p) = |p|^2/2$. Then, \eqref{MFGstat} simplifies to
\begin{equation}\label{deterministicsys}
\left\{
\begin{array}{ll}
\frac{(u'_k)^2}{2} + \lambda_k = V^k[m_1, m_2](x) & \text{in $\Omega$, $k=1,2$} \\
(u'_k m_k)' = 0,\\
u'_k =0 ,\quad u'_k m_k = 0  & \text{on $\partial\Omega$,}
\end{array}
\right.
\end{equation}
where the Neumann boundary conditions must 
be interpreted in the viscosity sense, as it is natural when taking the limit as $\nu \rightarrow 0$.

It is possible to construct explicit solutions for this system. For simplicity, we will consider the non-smoothened costs 
\begin{equation*}
V^k[m_1, m_2](x) 
 = G\left(\int_{\U(x)} m_k, \int_{\U(x)} m_{-k}; a_k, \eta_k\right), \\
\end{equation*}
where $G$ is defined in \eqref{eqG} 
 and $m_{-1}=m_2, m_{-2}=m_1$.

\begin{exe}[Uniform distributions]
\[
m_k=\frac{1}{b-a}, \quad u_k = 0, \quad \lambda_k = V^k[m_1, m_2], \quad k=1,2
\]
provides a solution: the two populations are distributed uniformly and the cost functions are everywhere zero if the two thresholds $a_k$ are not large (say, below $.5$ if $\eta$ is negligible).
\end{exe}

\begin{exe}[Segregated solutions]
A family of 
fully segregated solutions may be written down explicitly. Suppose that $\U(x) = (x-r,x+r) \cap [a,b]$ with $r>0$ small, and let $a=x_0 < x_1 < x_2 < x_3 < x_4 < x_5 = b$ such that $x_{k+1}-x_k > r$ for $k=0,\ldots,4$. Set
\[
m_1(x) = \frac{1}{x_2-x_1} \chi_{[x_1, x_2]}(x), \quad m_2(x) = \frac{1}{x_4-x_3} \chi_{[x_3, x_4]}(x) \quad \forall x \in [a,b].
\]
Then, $\int_{\U(x)} m_1$ and $\int_{\U(x)} m_2$ are continuous functions which have support in $(x_1-r,x_2+r)$ and $(x_3-r,x_4+r)$, respectively. $V^1[m_1, m_2](\cdot)$ is also continuous, and vanishes in $[x_1, x_2]$ (if $a_1 < 1$ and $\eta_1$ is small enough); indeed, $\int_{\U(x)} m_2=0$, so $\int_{\U(x)} m_1 / \int_{\U(x)} (m_1 + m_2) = 1$. The same is for $V^2$, so we define
\[
\lambda_k = 0, \quad u_k(x) = \int_a^x (2V^k[m_1, m_2](\sigma))^{1/2} d\sigma, \quad \forall x \in [a,b], k=1,2.
\]
It is easy to see that the functions $(u_1,u_2)$ 
 verify the two HJB equations of \eqref{deterministicsys}. Moreover, they satisfy the Neumann boundary conditions $u_k'(a) = u_k'(b) = 0$ in the viscosity sense\footnote{A function $u\in C([a,b])$ satisfies the homogeneous Neumann boundary conditions in the viscosity sense in $a$ if, for all test functions $\phi \in C^2$ such that $u-\phi$ has a local maximum at $a$, then $\min\{(\phi'(a))^2 - 2V^1[m_1, m_2](a), \phi'(a)\} \le 0$, and  for all $\phi \in C^2$ such that $u-\phi$ has a local minimum at $a$, then $\max\{(\phi'(a))^2 - 2V^1[m_1, m_2](a), \phi'(a)\} \ge 0$.} (but not in classical sense, as $(u_k')^2= 2V^k \neq 0$ on the boundary of $[a,b]$); indeed, suppose that $\phi$ is a test function such that $u_1-\phi$ has a local maximum at $x=b$. If we set $s=(2V^1[m_1, m_2](b))^{1/2}$ it follows that $\phi'(b) \le s$. If $\phi'(b) \ge -s$ then $(\phi'(b))^2 \le s^2$, so
\[
\min\{(\phi'(b))^2 - 2V^1[m_1, m_2](b), \phi'(b)\} \le 0.
\]
Similarly, if $u_1-\phi$ has a local minimum at $x=b$,
\[
\max\{(\phi'(b))^2 - 2V^1[m_1, m_2](b), \phi'(b)\} \ge 0,
\]
and in the same way it also holds that $u'_1(a)=u'_2(a)=u'_2(b)=0$ in the viscosity sense.

It remains to check that $m_k$ are (weak) solutions of the two Kolmogorov equations. To do so, we notice that $m_1$ is zero outside $[x_1, x_2]$; in $[x_1, x_2]$, however, $V^1[m_1, m_2](x) = 0$, hence $u'_1(x)=0$. Similarly, $m_2(x)$ or $u'_2(x)$ vanishes, so $(u'_k m_k)'=0$.

\end{exe}

\subsection{Finite horizon problems}

When the the cost paid by a single player has the form \eqref{longtimeJ}, which captures the effect of the $m_k$ long-time average, the mean field system of partial differential equations \eqref{MFGstat} which characterizes Nash equilibria is stationary, i.e. no time dependance appears. Suppose, on the other hand, that a time horizon $T > 0$ is fixed, and the cost paid by the average player of the $k$-th population is of the form
\begin{equation}\label{fixedtimeJ}
J^k(X^k_0, t, \alpha^1, \alpha^2, m_1, m_2) = \mathbb{E} \left[\int_t^T L(x,\alpha^k_s) + V^k[m_1, m_2](X^k_s) ds  + G_T^k[m(T)](X^k_T)\right],
\end{equation}
where $t$ is the initial time and $G_T^k[m(T)]$ represents the cost paid at the final time $
T$. Then, the time variable $t$ enters the Mean Field Game system, which becomes

\begin{equation}\label{MFGnons}
\left\{
\begin{array}{ll}
- \partial_t u_k - \nu \Delta u_k + H^k(x,Du_k)  = V^k[m](x), & \textit{in } \Omega \times (0,T), \\
\partial_t m_k - \nu \Delta m_k - \div(D_p H^k(x,Du_k)m_k) = 0 & \textit{in } \Omega \times (0,T), \\
\partial_n u_k = 0, \, \nu \partial_n m_k + m_k D_p H^k(x, Du_k) \cdot n = 0  & \textit{on }  \partial \Omega \times (0,T), \\
u_k(x,T) = G_T^k[m(T)](x), \, m_k(x,0) = m_{k,0}(x) & \textit{in } \Omega
\end{array}
\right.
\end{equation}
We observe that \eqref{MFGnons} has a backward-forward structure: the Hamilton-Jacobi-Bellman equation for the value functions $V^k$ is backward in time, being the 
representative agent able to foresee the outcome of his actions, while his own distribution $m_k$ evolves forward in time. The final cost $G_T^k$ and the initial distributions $m_{k,0}$ are prescribed as final/initial boundary data.

For one population with periodic boundary conditions, the rigorous derivation of such a system from Nash equilibria of $2N$-persons games in the  limits as $N\to \infty$ was proved very recently in the fundamental paper by Cardaliaguet, Delarue, Lasry, and Lions  Ref. \cite{CDLL} on the so-called Master Equation of MFG. 
For related results by probabilistic methods, see Ref. \cite{Fish} and the references therein. The fact that from a solution of \eqref{MFGnons} one can synthesize $\epsilon$-Nash equilibria for the $2N$-persons game, if $N$ is large enough, is due to Huang, Caines and Malham{\'e} Ref. \cite{HCM:06}  (for one population) and to Nourian and Caines for problems with major an minor agents, Ref. \cite{NourCa}.

We also point out that the system \eqref{MFGstat} captures in some circumstances the behavior of \eqref{MFGnons} as $T \rightarrow \infty$. In particular, for a single population, 
if the cost $V$ is monotone increasing with respect to $m$, then solutions of \eqref{MFGnons} converge to solutions of \eqref{MFGstat} (see Ref. \cite{CardaliaguetLions}). 
It is not clear whether a similar phenomenon can be rigorously proved 
 in our multi-population systems, since monotonicity fails, but we show in Section \ref{sec_numresults} that it is likely to occur by providing some numerical evidences.

Existence of classical solutions for non-stationary Mean Field Games systems like \eqref{MFGnons} can be stated under rather general assumptions. In Ref. \cite{CardaliaguetNotes} a detailed proof is provided for the single-population case with periodic boundary conditions.
Next we state a precise existence result for our system \eqref{MFGnons} and outline its proof, whose main modifications are  due to the presence of Neumann boundary conditions.
Nevertheless, the general lines of the argument are the same: 
the fixed point structure of the system is exploited and the regularizing assumptions on $V^k, G_T^k$ assure that suitable a-priori estimates hold.

We recall that the space of probability measures $\Prob(\overline{\Omega})$ can be endowed with the Kantorovitch-Rubinstein distance, which metricize the weak$^*$ topology on $\Prob(\overline{\Omega})$. The assumptions on $V^k$, $G_T^k$, $m_{i,0}$ we require are
\begin{enumerate}
\item $V^k, G_T^k$ are continuous in $\overline{\Omega} \times \Prob(\overline{\Omega})^2$.
\item $V^k[m], G_T^k[m]$ are bounded respectively in $C^{1,\beta}(\overline{\Omega}), C^{2,\beta}(\overline{\Omega})$ for some $\beta > 1$, uniformly with respect to $m \in \Prob(\overline{\Omega})^2$.
\item $H^k \in C^1(\overline{\Omega} \times \RsetN)$ and it satisfies for some $C_0 > 0$ the growth condition
\[
D_pH^k(x,p) \cdot p \ge - C_0(1+|p|^2).
\]
\item $m_{i,0} \in C^{2,\beta}(\overline{\Omega})$.
\item The following compatibility conditions are satisfied:
\begin{align*}
&\partial_n G_T^k[m(T)](x) = 0, \quad \forall m \in  \Prob(\overline{\Omega})^2, x \in \partial \Omega, \\
&\partial_n m_{i,0}(x) + m_{i,0} D_p H^k(x, Du_k(x)) \cdot n = 0  \quad \textit{on }  \partial \Omega.
\end{align*}
\end{enumerate}
The assumptions 
(1) and (2) are satisfied by the non-local costs $V^k, \overline{V^k}$ defined by \eqref{Vk} and \eqref{Vbarsmoothened} in Section \ref{Exa}
if the negative part function  $(\cdot)^-$ in $G$ 
 is replaced by a smooth approximation \footnote{For example, $\varphi_\epsilon(t) = \frac{1}{2}( \sqrt{t^2 + \epsilon^2}-t)$,  $\epsilon > 0$ small, or $\Psi_{-, \epsilon}(\cdot)$ as in \eqref{approx_pnpart}.}.

\begin{thm}\label{nonstat_ex}
Under the  assumptions listed above there exists at least one classical solution to \eqref{MFGnons}.
\end{thm}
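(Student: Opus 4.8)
### Proof Proposal for Theorem \ref{nonstat_ex}

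The plan is to set up a Schauder fixed-point argument in a suitable space of pairs of measure-valued curves, following the scheme of Ref.~\cite{CardaliaguetNotes} for the single-population periodic case and adapting it to the two-population system with Neumann boundary conditions. First I would fix the functional-analytic setting: let $\mathcal{C} := C([0,T]; \Prob(\overline{\Omega}))^2$, which is a convex, compact subset of a suitable locally convex space (compactness via the metrizability of the weak$^*$ topology on $\Prob(\overline\Omega)$ and Ascoli's theorem, once equicontinuity in time is established). Given a pair $\mu = (\mu_1, \mu_2) \in \mathcal{C}$, assumptions (1) and (2) guarantee that $x \mapsto V^k[\mu(t)](x)$ is bounded in $C^{1,\beta}(\overline\Omega)$ uniformly in $t$, and $x\mapsto G_T^k[\mu(T)](x)$ is bounded in $C^{2,\beta}(\overline\Omega)$; I would also need to check that these are continuous in $t$, which follows from the continuity hypothesis (1) together with the continuity of $t\mapsto\mu(t)$.

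The map $\Phi: \mathcal{C} \to \mathcal{C}$ is built in two steps. Step one: given $\mu$, solve the two decoupled backward Hamilton-Jacobi-Bellman equations
\[
-\partial_t u_k - \nu\Delta u_k + H^k(x,Du_k) = V^k[\mu](x) \quad\text{in }\Omega\times(0,T),
\]
with $\partial_n u_k = 0$ on $\partial\Omega\times(0,T)$ and terminal data $u_k(x,T) = G_T^k[\mu(T)](x)$. Here the compatibility condition $\partial_n G_T^k[\mu(T)] = 0$ from (5) ensures the terminal datum is consistent with the Neumann condition, so one obtains a classical solution $u_k$; the quadratic-type growth allowed in the Hamiltonian is handled by the standard Lipschitz (gradient) bound for Neumann problems of Bernstein type, using convexity of $\Omega$ and $\partial_n H_0\ge 0$-type arguments as in Theorem \ref{statexistence} — one derives a uniform bound $\|u_k\|_{C^{1+\beta/2,2+\beta}} \le C$ independent of $\mu$, by first getting $\|Du_k\|_\infty \le C$ and then bootstrapping via parabolic Schauder estimates. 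Step two: with $u_k$ in hand, solve the two forward Kolmogorov-Fokker-Planck equations
\[
\partial_t m_k - \nu\Delta m_k - \div(D_pH^k(x,Du_k)m_k) = 0 \quad\text{in }\Omega\times(0,T),
\]
with the initial data $m_k(\cdot,0) = m_{k,0}$ from (4) and the Robin-type flux boundary condition $\nu\partial_n m_k + m_k D_pH^k(x,Du_k)\cdot n = 0$, which makes the equation mass-preserving so that $m_k(t)$ stays in $\Prob(\overline\Omega)$; the compatibility condition in (5) ensures $m_{k,0}$ matches the boundary condition at $t=0$. Since $b_k := -D_pH^k(\cdot,Du_k)$ is bounded and Hölder continuous (by the $C^{1,\beta}$-type bound on $u_k$), linear parabolic theory gives a unique nonnegative solution $m_k \ge 0$ of total mass one, with a modulus of continuity $t\mapsto m_k(t)\in\Prob(\overline\Omega)$ that is uniform over $\mu\in\mathcal C$ (e.g. $\dkr(m_k(t),m_k(s)) \le C|t-s|^{1/2}$, which gives the equicontinuity needed for compactness of $\mathcal{C}$). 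Then set $\Phi(\mu) := (m_1, m_2)$. Continuity of $\Phi$ follows from stability estimates: if $\mu^n \to \mu$ in $\mathcal C$, then $V^k[\mu^n] \to V^k[\mu]$ uniformly by (1), hence $u_k^n \to u_k$ in $C^1$ by uniqueness and the uniform estimates, hence $b_k^n \to b_k$, hence $m_k^n \to m_k$ by linear stability of the Fokker-Planck equation. A fixed point of $\Phi$ is exactly a classical solution of \eqref{MFGnons}, and Schauder's theorem applies.

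The main obstacle I expect is obtaining the uniform gradient bound $\|Du_k\|_\infty \le C$ for the HJB equations with Neumann boundary conditions and a Hamiltonian that is only required to satisfy the one-sided growth condition (3) rather than being of the special power form of Theorem \ref{statexistence}. In the periodic case this is a classical Bernstein estimate; with a boundary one must control $Du_k$ near $\partial\Omega$, which typically forces either convexity of $\Omega$ (so the boundary curvature has a favorable sign) or a sign condition tying $H^k$ to the boundary geometry — this is why the statement of Theorem \ref{statexistence} carries such hypotheses, and the cleanest route here is to invoke exactly those structural assumptions, or to note that the required a-priori bound is the content of the estimates already established in Ref.~\cite{Cirant} and Ref.~\cite{CardaliaguetNotes} and adapt them. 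A secondary technical point is verifying that the second compatibility condition in (5), which involves $Du_k(x)$ on $\partial\Omega$ and therefore depends on the (as yet unknown) solution, is correctly interpreted — in practice one reads it as a constraint on $m_{i,0}$ given that $\partial_n u_k=0$ makes $D_pH^k(x,Du_k)\cdot n$ reduce to its tangential part on the boundary, so the condition is really $\partial_n m_{i,0} = 0$ up to lower-order terms; I would state this carefully at the outset so that the Fokker-Planck solvability in Step two is not circular. Once these estimates are in place, the remaining arguments (mass conservation, nonnegativity via the maximum principle, equicontinuity in time, bootstrapping to classical regularity) are routine.
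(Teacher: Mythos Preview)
Your Schauder fixed-point strategy matches the paper's: freeze $(\mu_1,\mu_2)$, solve the two HJB equations, feed the resulting drifts into the Fokker--Planck equations, and close the loop via a uniform $|t-s|^{1/2}$ modulus for $t\mapsto m_k(t)$ in $\dkr$. Two differences in execution are worth noting. First, the paper derives the key estimate $\dkr(m(t),m(s))\le c_0(1+\|b\|_\infty)|t-s|^{1/2}$ not from parabolic PDE theory but from the stochastic representation of the Fokker--Planck solution as the law of a reflected SDE (Refs.~\cite{StrookVaradhanBoundary}, \cite{AndersonOrey}): one writes $\dkr(m(t),m(s))\le \mathbb{E}|X_t-X_s|$ and bounds the increment of the reflected diffusion directly; your appeal to ``linear parabolic theory'' for this step is vague and would need to be made concrete, since the Kantorovich--Rubinstein continuity does not fall out of Schauder estimates alone. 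Second, your concern about a boundary Bernstein argument for $\|Du_k\|_\infty$ requiring convexity of $\Omega$ or the special Hamiltonian form is misplaced here: the paper simply invokes Theorem~7.4, p.~491 of Ref.~\cite{Ladyzhenskaya} for quasilinear parabolic Neumann problems, which needs only the one-sided growth condition~(3) and delivers existence together with the Schauder bounds in one stroke --- the extra structural hypotheses you import belong to the \emph{stationary} Theorem~\ref{statexistence}, where the elliptic gradient bound is genuinely more delicate. A minor point: your set $\mathcal{C}=C([0,T];\Prob(\overline\Omega))^2$ is not compact as stated; the paper builds the H\"older-$1/2$ constraint into the definition of $\mathcal{C}$ from the outset, so that $\mathcal{C}$ is convex and compact and one only has to check that the map lands back in it.
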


\begin{proof} \textbf{Step 1.} We start by an estimate on the Fokker-Planck equation. Suppose that $b$ is a given vector field, continuous in time and H{\"o}lder continuous in space (on $\overline{\Omega}$), and $m \in L^1(\Omega \times (0,T))$ solves in the weak sense 
\begin{equation}\label{genericFP}
\left\{
\begin{array}{ll}
\partial_t m - \nu \Delta m + \div(b \, m) = 0 & \textit{in } \Omega \times (0,T), \\
\nu \partial_n m(x) - m b \cdot n = 0  & \textit{on }  \partial \Omega \times (0,T), \\
m(x,0) = m_{0}(x) & \textit{in } \Omega.
\end{array}
\right.
\end{equation}

Then, $m(t)$ is the law of the following stochastic differential equation with reflection
\begin{equation}\label{reflectedsde}
\begin{array}{l}
X_t = X_0 + \int_0^t b(X_s, s) ds + \sqrt{2\nu} B_t - \int_0^t n(X_s) dl_s \quad X_t \in \overline{\Omega} \\
l_t = \int_0^t \chi_{\partial \Omega} (X_s) dl_s \\
l(0) = 0 \quad \text{$l$ is nondecreasing},
\end{array}
\end{equation}
where $B_t$ is a standard Brownian motion over some probability space, $X_t$, $l_t$ (the so-called local time) are continuous processes adapted to $B_t$ and the law of $X_0$ is $m_0$. This can be verified by exploiting the results of Ref. \cite{StrookVaradhanBoundary}, where it is proved that for all $\varphi \in C^2(\overline{\Omega})$ such that $\partial_n \varphi = 0$ on $\partial \Omega$,
\begin{equation}\label{marting}
M_t := \varphi(X_t) - \int_0^t [\nu \Delta \varphi(X_t)  + b(X_t, t) \cdot D\varphi(X_t)] dt
\end{equation}
is a martingale with respect to $B_t$. As a consequence, taking expectations in \eqref{marting} shows that the law of $X_t$ is the (unique) solution of \eqref{genericFP}.

This kind of stochastic interpretation of \eqref{genericFP} allows us to derive the following estimate:
\begin{multline*}
d(m(t), m(s)) = \sup \left\{ \int_{{\Omega}} \phi(x) (m(x,t) - m(x,s)) dx : \text{$\phi$ is $1$-Lipschitz continuous} \right\} \\
\le \sup \left\{ \E|\phi(X_t) - \phi(X_s)| : \text{$\phi$ is $1$-Lipschitz continuous} \right\} \le \E|X_t - X_s| \\
\le \E \left[ \int_s^t |b(X_\tau, \tau) d\tau| + \sqrt{2\nu} |B_t- B_s| \right],
\end{multline*}
for all $s,t \in [0,T]$, where the last inequality follows from Ref. \cite{AndersonOrey}. We can then conclude that
\begin{equation}\label{timeest}
d(m(t), m(s)) \le c_0 (1 + \|b\|_\infty)|t-s|^{\frac{1}{2}}
\end{equation}
for some $c_0$ which does not depend on $t,s$.

\textbf{Step 2.} We set up now the existence argument, which is based on a fixed-point method. Let $\C$ be the set of maps $\mu \in C^0([0,T],\Prob(\overline{\Omega}))$ such that
\begin{equation}\label{supnormC}
\sup_{s \neq t} \frac{d(\mu(s), \mu(t))}{|t-s|^{1/2}} \le C_1,
\end{equation}
for a constant $C_1$ large enough that will be chosen subsequently. The set $\C$ is convex and compact. To any $(\mu_1, \mu_2) \in \C^2$ we associate the (unique) classical solution $(u_1, u_2)$ of
\begin{equation}\label{hjbmap}
- \partial_t u_k - \nu \Delta u_k + H^k(x,Du_k)  = V^k[\mu_1, \mu_2](x),
\end{equation}
satisfying the Neumann boundary conditions $\partial_n u_k = 0$ on $\partial \Omega$, and then define $m = (m_1, m_2) = \Psi(\mu)$ as the solutions of the two Fokker-Planck equations
\begin{equation}\label{fpmap}
\partial_t m_k - \nu \Delta m_k - \div(D_p H^k(x,Du_k)m_k) = 0.
\end{equation}
A fixed point of $\Psi$ is clearly a solution of \eqref{MFGnons}. Such a mapping is indeed well-defined: existence for the HJB equation \eqref{hjbmap} is guaranteed by Theorem 7.4, p. 491 of Ref. \cite{Ladyzhenskaya} and the well-posedness of \eqref{fpmap} is stated in 
Theorem 5.3, p. 320 of Ref. \cite{Ladyzhenskaya}. These results incorporate also the Schauder a-priori estimates, that together with \eqref{timeest} make $\Psi$ continuous and a mapping from $\C^2$ into itself, provided that the constant $C_1$ in \eqref{supnormC} is large enough. The existence of a fixed point for $\Psi$ follows from the application of the Schauder fixed point theorem.
\end{proof}

While existence of smooth solutions of \eqref{MFGnons} with costs $V^k, \overline{V^k}$ can be established through standard methods, the local versions $V^k_\ell, \overline{V}_\ell^k$ are not regularizing, so the ideas of Theorem \ref{nonstat_ex} cannot be applied directly; in this case, existence of solutions is a much more delicate issue. 

A well-established workaround is to smoothen the costs by convolution with kernels, and pass to the limit in a sequence of approximating solutions (which are obtained by arguing as in Theorem \ref{nonstat_ex}); this procedure requires a-priori bounds, that strongly depend on the behavior of the Hamiltonian  at infinity, the cost, and the space dimension $d$. It is not the purpose of this paper to present theoretical results on existence of smooth solutions in full generality. 
We believe that, under suitable assumptions, solutions can be obtained without substantial difficulties 
 by extending 
 known results for one-population MFG on the torus to the case of two populations with Neumann boundary conditions. Next we briefly explain how. 

Suppose that $H^k(x,p)$ behaves like $c |p|^\gamma$ as $p \to \infty$ ($c > 0$, and $\gamma > 1$). In our setting, the couplings $V^k_\ell, \overline{V}_\ell^k$ are non-negative, and a-priori bounds on $\int |D u_k|^\gamma m_k \, dx dt$ and $\int V^k_\ell m_k \, dx dt$ (quantities that are somehow related to the energy of the system) can be easily proved. To carry out the approximation procedure, it is crucial to have a-priori bounds on $\|m_k\|_{L^\infty(\Omega)}$.

\begin{exe} 
In the purely quadratic case, namely, 
 $H^k(x,p) = |p|^2/2$, the Hopf-Cole transformation can be used to  transform \eqref{MFGnons} into a system of two couples of semilinear equations of the form
\[
\begin{cases}
& - \partial_t \phi_k - \nu \Delta \phi_k + \frac{1}{2 \nu} V^k_\ell(\phi_1 \psi_1, \phi_2 \psi_2) \phi_k = 0, \\
& \partial_t \psi_k - \nu \Delta \psi_k + \frac{1}{2 \nu} V^k_\ell(\phi_1 \psi_1, \phi_2 \psi_2) \psi_k = 0,
\end{cases}
\]
where $\phi_k = e^{-u_k/{2\nu}}$ and $\psi_k = m_k e^{u_k/{2\nu}}$, 
 with the corresponding initial-final data and Neumann boundary conditions. Bounds on $\|m_k\|_{L^\infty(\Omega)} = \|\phi_k \psi_k\|_{L^\infty(\Omega)}$ can be derived by arguing as in Ref. \cite{CardaliaguetLions}, where a Moser iteration method is implemented.
\end{exe}

\begin{exe} If $1 < \gamma < 1+1/(d+1)$, 
so that $H$ grows almost linearly, it is known that existence of smooth solutions can be established, see the discussion in Ref. \cite{GPMsub}. In particular, the basic estimate for $\int |D u_k|^\gamma m_k$ implies that the drifts $D_p H^k$ entering the Fokker-Planck equations belong to $L^p(m_k)$, where $p > d+2$. It is known that this kind of Lebesgue regularity on the drifts is strong enough to guarantee H{\"o}lder bounds for $m_k$.
\end{exe}

\begin{exe} 
For other values of $\gamma$, we observe that  $V^k_\ell$ 
are uniformly bounded. 
 Therefore,  at least in the subquadratic case (namely, when $\gamma \le 2$), one might exploit the classical Lipschitz bounds for viscous HJ equations and H\"older estimates for the Fokker-Planck to achieve a-priori regularity for $m_k$, see Ref. \cite{Ladyzhenskaya}. 
 
The setting with the costs $\overline{V}_\ell^k$ is 
 more delicate, as $\overline{V}_\ell^k$ is a-priori unbounded in $L^\infty$. Here, one might reason as in Ref. \cite{GPMsub}, or Ref. \cite{GPMsup} in the superquadratic case (see also Ref. \cite{GomesBook}), and finely combine regularity of the HJB equation and the Fokker-Planck equation to prove existence of solutions of \eqref{MFGnons}, at least if the space dimension is sufficiently small ($d = 1,2$). We leave these extensions to future work. 
\end{exe} 

\section{Numerical methods}
\label{sec_numstrategies}


Numerical methods for approximating mean field game systems are an important research issue
 since they are crucial for applications.
The finite difference methods described below are reminiscent ot the method first introduced and analysed in Ref. \cite{AchdouCapuzzo}
for  mean field games with a single population, which, to the best of our knowledge, 
remains the more robust and flexible technique.   
The numerical scheme  basically relies on monotone approximations of the Hamiltonian and on a suitable weak formulation of the Kolmogorov equation.
 It has several important features:  
\\
\begin{itemize}
\item  existence and possibly uniqueness for the discretized problems can be obtained by similar arguments as those used in the continuous case
\item  it is robust when $\nu\to 0$ (the deterministic limit of the models)
\item it can be used  for finite and infinite horizon problems
\item bounds on the solutions, which are uniform in the grid step, can be proved under reasonable assumptions on the data.
\end{itemize}
A first result on  the convergence to classical solutions was contained in Ref. \cite{AchdouCapuzzo}. The method was used for planning problems 
(the terminal condition is a Dirichlet like condition for $m$) in  Ref. \cite{AchdouPlanning}.
 Ref. \cite{AchdouCamilli} contains a further analysis of convergence to classical solutions and very general  results on the convergence to weak solutions are supplied
 in  Ref.  \cite{AP2016}. In Ref. \cite{ABLLM14}, similar computational techniques are  applied to MFG models in macro-economics.
\\
Discrete time, finite state space mean  field games
were discussed in  Ref. \cite{MR2601334}. We also refer to Ref. \cite{GueantQuadratic,MR2928382}
 for a specific constructive approach when the Hamiltonian is quadratic.  Semi-Lagrangian approximations were investigated
 in Ref. \cite{MR3148086,CS2014}. Finally, augmented Lagrangian methods for the solution of the  system of equations arising from the discrete version of a variational 
mean field game was proposed in Ref. \cite{BenamouCarlier2015ALG2}.

\subsection{Stationary PDEs}\label{s:stat_approx}


To approximate \eqref{MFGstat}, we will implement the strategy 
proposed in Ref. \cite{AchdouCapuzzo}, that consists of taking the long-time limit of the \textit{forward-forward} MFG system
\begin{equation}\label{ffsys}
\begin{cases}
\partial_t u_k - \nu \Delta u_k + H^k(x,Du_k) = V^k[m_1, m_2](x) & (0,T) \times \Omega\\
\partial_t m_k - \nu \Delta m_k - \div(D_p H^k(x, Du_k ) \, m_k) = 0, \\
\partial_n u_k = 0, \quad \nu \partial_n m_k + m_k D_p H^k(x, Du_k) \cdot n = 0, & (0,T) \times \partial \Omega \\
u_k(t=0) = u_{k,0}, \quad m_k(t=0) = m_{k,0}, \quad k=1,2.
\end{cases}
\end{equation}
This method is reminiscent of long-time approximations for the cell problem in homogenization theory: we expect that 
there exists some $\lambda_k \in \Rset$ such that $u_k(\cdot,T)- \lambda_k T$ and $m_k(\cdot,T)$ converge as $T \to \infty$, 
respectively, to some $\bar{u}_k(\cdot),\bar{m}_k(\cdot)$ solving \eqref{MFGstat}. Although this has not been proven rigorously in general in the MFG setting, 
Gu{\'e}ant studies some single-population examples where the coupling $V(m)$ is not increasing with respect to the distribution $m$
 (so there is no uniqueness of solutions, as in our framework) and justifies the approach (see Ref. \cite{Gueantnotes}). Very recently, a proof of the long-time convergence for a class of forward-forward one dimensional MFG has been proved in Ref. \cite{GomesFF}.
 We are going to present  numerical experiments, even if  no rigorous proof of any convergence is available at this stage in our multi-population setting.

We mention that if the Hamiltonians $H^k$ are quadratic, it is possible to simplify \eqref{MFGstat} through the Hopf-Cole change of variables and reduce the number of unknowns (see Ref. \cite{GueantQuadratic}).

We will develop a finite-difference scheme for \eqref{ffsys} in space dimension $d=2$ as in Ref. \cite{AchdouCapuzzo}, assuming for simplicity that the Hamiltonians are of the form
\begin{equation}\label{modelH}
H^k(x,p) = W^k(x) + \frac{1}{\gamma_k}|p|^{\gamma_k}, \quad \gamma_k > 1, \quad W^k \in C^2(\Omega).
\end{equation}

In space dimension $d \neq 2$, analogous schemes can be set up. Consider a square domain $\Omega = (0,1)^2$, and a uniform grid with mesh step $h$, assuming that $1/h$ is an integer $N_h$; denote by $x_{i,j}$ a generic point of the grid. Let $\Delta t$ be a positive time step   and  $t_n = n \Delta t$. 
The values of $u_k$ and $m_k$ at $x_{i,j}$, $t_n$ will be approximated by $U_{i,j}^{k,n}$ and $M_{i,j}^{k,n}$ respectively, $k=1,2$, $i,j = 1,\ldots,N_h$ 
and $n\ge 0$.
\\
We introduce the usual finite difference operators
\[
(D^+_1 U)_{i,j}=\frac{U_{i+1,j}-U_{i,j}}{h}, \quad (D^+_2 U)_{ij}=\frac{U_{i,j+1}-U_{i,j}}{h},
\]
and the numerical Hamiltonians $g^k: \Omega \times \Rset^4 \rightarrow \Rset$ of Godunov type defined by 
\[
g^k(x,q_1,q_2,q_3,q_4) = W^k(x) + \frac{1}{\gamma_k}\left[[(q_1)^-]^2 + [(q_3)^-]^2 + [(q_2)^+]^2 + [(q_4)^+]^2\right]^{\gamma_k/2}.
\]
Denoting by
\[
[D_h U]_{i,j} = ((D^+_1 U)_{i,j}, (D^+_1 U)_{i-1,j}, (D^+_2 U)_{i,j}, (D^+_2 U)_{i,j-1}),
\]
the finite difference approximation of the Hamiltonian function $H^k$ will be $g^k(x,[D_h U^k]_{i,j})$. \\
We choose the classical five-points discrete version of the Laplacian
\[
(\Delta_h U)_{i,j} = -\frac{1}{h^2}(4U_{i,j}-U_{i+1,j}-U_{i-1,j}-U_{i,j+1}-U_{i,j-1}).
\]
The non-local couplings $V^k[m_1, m_2]$, $\overline{V}^k[m_1, m_2]$ involve terms of the form $\int_\Omega K(x,y)m_k(y) dy$; we approximate them via
\[
h^2\sum_{r,s} K(x_{i,j}, x_{r,s}) M_{r,s}^{k,n}.
\]
On the other hand, local couplings $V_\ell^k$ and $\overline{V}_\ell^k$ will be simply function evaluations at $x_{i,j}$, that is $(V_\ell^k[M^{1,n}, M^{2,n}])_{i,j} = V^k_\ell(M^{1,n}_{i,j}, M^{2,n}_{i,j})$.

In order to approximate the Kolmogorov equations in \eqref{ffsys}, we consider their weak formulation. Given any test function $\phi$, the divergence term involved can be rewritten as
\[
-\int_\Omega \div (m_k D_p H^k(x,Du_k)) \phi = \int_\Omega m D_p H^k(x,Du_k) \cdot D \phi,
\]
which is going to be approximated by (boundary terms disappear by Neumann conditions)
\[
h^2 \sum_{i,j} M^{k,n}_{i,j} D_q g^k(x,[D_h U^{k,n}]_{i,j}) \cdot [D_h \Phi]_{i,j},
\]
where $\Phi$ is the finite difference version of $\phi$. By introducing the compact notation
\[
\BB^k_{i,j}(U,M) = \frac{1}{h} \left( \begin{array}{l}
M_{i,j} \partial_{q_1} g^k(x,[D_h U]_{i,j}) - M_{i-1,j} \partial_{q_1} g^k(x,[D_h U]_{i-1,j}) \\
\quad + M_{i+1,j} \partial_{q_2} g^k(x,[D_h U]_{i+1,j}) - M_{i,j} \partial_{q_2} g^k(x,[D_h U]_{i,j}) \\
\quad\quad + M_{i,j} \partial_{q_3} g^k(x,[D_h U]_{i,j}) - M_{i,j-1} \partial_{q_3} g^k(x,[D_h U]_{i,j-1}) \\
\quad\quad\quad + M_{i,j+1} \partial_{q_4} g^k(x,[D_h U]_{i,j+1}) - M_{i,j} \partial_{q_4} g^k(x,[D_h U]_{i,j})
\end{array}\right),
\]
we can finally write the discrete version of \eqref{ffsys}
\begin{equation}\label{ffdiscrsys}
\left\{
\begin{array}{l}
\frac{U_{i,j}^{k,n+1}-U_{i,j}^{k,n}}{\Delta t} - \nu (\Delta_h U^{k,n+1})_{i,j} + g^k(x,[D_h U^{k,n+1}]_{i,j}) = (V^k[M^{1,n+1}, M^{2,n+1}])_{i,j}, \\
\frac{M_{i,j}^{k,n+1}-M_{i,j}^{k,n}}{\Delta t} - \nu (\Delta_h M^{k,n+1})_{i,j} - \BB^k_{i,j} (U^{k,n+1}, M^{k,n+1}) = 0, \quad k=1,2.
\end{array}
\right.
\end{equation}
The system above has to be satisfied for internal points of the grid, i.e. $2 \le i,j \le N_h - 1$. The finite difference version of the homogeneous Neumann boundary conditions for $U$ is, for all $n,k$,
\[
\begin{split}
& U^{k,n}_{1,j} = U^{k,n}_{2,j}, \quad U^{k,n}_{N_h - 1,j} = U^{k,n}_{N_h,j}, \quad \forall j =2,\dots,N_h-1 \\
& U^{k,n}_{i,1} = U^{k,n}_{i,2}, \quad U^{k,n}_{i,N_h - 1} = U^{k,n}_{i,N_h}, \quad \forall i =2,\dots,N_h-1 \\
& U^{k,n}_{1,1} = U^{k,n}_{2,2}, \quad U^{k,n}_{N_h,1} = U^{k,n}_{N_h-1,2}, \\
& U^{k,n}_{1,N_h} = U^{k,n}_{2,N_h-1}, \quad U^{k,n}_{N_h,N_h} = U^{k,n}_{N_h-1,N_h-1}.
\end{split}
\]
In a similar manner, boundary conditions will be imposed on $M^{k,n}$ (note that, in view of the particular choice of the Hamiltonian, $\partial_n m_k = 0$ on the boundary); 
The scheme guarantees that $M^{k,n}_{i,j} \ge 0$.

In Ref. \cite{AchdouCapuzzo} it is proven  that \eqref{ffdiscrsys} has a solution in the case of a single population and 
 periodic boundary conditions, 
 (see Theorem 5). We expect that it is true also with Neumann boundary conditions and two populations, since similar arguments can be used.

The present scheme  is implicit, since each time iteration consists of solving a coupled system of nonlinear equations for $U^{k,n+1}, M^{k,n+1}$, given $U^{k,n}, M^{k,n}$. This can be done for example by means of a Newton method, increasing possibly the time step when the asymptotic regime is close to be reached. It has been indicated in Ref. \cite{AchdouCapuzzo}, Remark 11, that in order to have a good approximation of the system of nonlinear equations, it is sufficient to perform just one step of the Newton method: indeed, it has been observed that in general one step reduces the residual substantially.

Finally, the discrete version of \eqref{ffsys} that will be implemented for numerical experiments reads
\begin{equation}\label{ffdiscrsystwo}
\left\{
\begin{array}{l}
\frac{U_{i,j}^{k,n+1}-U_{i,j}^{k,n}}{\Delta t} -  \nu (\Delta_h U^{k,n+1})_{i,j} + g^k(x,[D_h U^{k,n}]_{i,j})  \\
 \hspace{3cm}+ D_q g(x,[D_h U^{k,n}])_{i,j} \cdot ([D_h U^{k,n+1}]_{i,j} - [D_h U^{k,n}]_{i,j}) \\
 \hspace{8cm} = (V^k[M^{1,n}, M^{2,n}])_{i,j}, \\
\frac{M_{i,j}^{k,n+1}-M_{i,j}^{k,n}}{\Delta t} -  \nu (\Delta_h M^{k,n+1})_{i,j} - \BB_{i,j}^k(U^{k,n+1}, M^{k,n+1}) = 0, \quad k=1, 2.
\end{array}
\right.
\end{equation}
In this formulation, at each time iteration one needs to solve a coupled system of linear equations. Note that \eqref{ffdiscrsystwo} consists of an implicit scheme for the (forward) Kolmogorov equation (i.e. implicit with respect to $m$ and $u$), 
coupled with a \emph{linearized} semi-implicit scheme for the (forward) Hamilton-Jacobi equation (i.e. implicit with respect to $u$ and explicit with respect to $m$). \\
We choose the initial data
\[
U^{k,0} = 0, \quad M^{k,0} = M^k_0,
\]
with
\[
h^2 \sum_{i,j} (M^k_0)_{i,j} = 1, \quad k=1,2.
\]
We expect that  there exists some real number $\lambda_{h,\Delta t}$,
such that  $M^{k,n}$ and   $U^{k,n} - \lambda_{h,\Delta t} n\Delta t $  tend to some stationary configuration as $n$ tends to infinity.

\subsection{Evolutive   PDEs} 
The discrete scheme used for (\ref{MFGnons}) is obtained by adapting the methods proposed and studied in Ref. \cite{AchdouCapuzzo}  
to the multi-population case.  For simplicity, 
let us focus on the case when the terminal cost for the agents of type $k$ does not
 depend on $m(T)$, so the terminal condition on $u_k$ becomes
\begin{displaymath}
  u_k(x,T)= u_{k,T}(x) \quad \hbox{in } \Omega,
\end{displaymath}
and on Hamiltonians given by (\ref{modelH}).
The time-step $\Delta t$ is assumed to be of the form $T/N$, for a positive integer $N$. 
Using the same notations as in \S~\ref{s:stat_approx}, the approximate version of 
\eqref{MFGnons}  reads:
 for any $0\le n < N$, $1< i,j< N_h$,
\begin{equation}\label{eq:1}
\left\{
  \begin{array}[c]{ll}
\ds \frac{U_{i,j}^{k,n+1}-U_{i,j}^{k,n}}{\Delta t} + \nu (\Delta_h U^{k,n})_{i,j} - g^k(x,[D_h U^{k,n}]_{i,j}) 
 &\ds =- (V^k[M^{1,n}, M^{2,n}])_{i,j},\\
\ds\frac{M_{i,j}^{k,n+1}-M_{i,j}^{k,n}}{\Delta t} -  \nu (\Delta_h M^{k,n+1})_{i,j} - \BB^k_{i,j}(U^{k,n}, M^{k,n+1}) &= 0, \quad k=1, 2,
  \end{array}
\right.
\end{equation}
with the initial and terminal conditions: for $1\le i,j\le N_h$,
\begin{equation}\label{eq:2}
M_{i,j}^{k,0}= m_{k,0}(x_{i,j}),\quad  \quad   U_{i,j}^{k,N}= u_{k,T}(x_{i,j}).
\end{equation}
It can be supplemented with discrete Neumann  conditions as in \S~\ref{s:stat_approx} or with periodicity conditions.
Note that (\ref{eq:1}) consists of a semi-implicit scheme for the  (forward) Kolmogorov equation 
 (i.e. implicit with respect to $m$ and explicit with respect to $u$) 
coupled with a semi-implicit scheme for the (backward)  Hamilton-Jacobi equation (i.e. implicit with respect to $u$ and explicit with respect to $m$). 
When dealing with one population only, it was shown in Ref. \cite{AchdouCapuzzo} that the discrete scheme
 preserves the structure of the continuous problem, which makes it possible to prove existence, and  uniqueness/stability 
under additional assumptions. In the multi-population case also, existence of solutions of the discrete system
 can be obtained by using a Brouwer fixed point method. Then, assuming that   $ h^2 \sum_{i,j}  M_{i,j}^{k,0}=1$ for $k=1,2$,
 mass conservation, i.e. $h^2 \sum_{i,j}  M_{i,j}^{k,n}=1$  for any $n$, $k=1,2$,
 is a consequence of the definition of $\BB^k$. Using the monotonicity of $g$, we also obtain the nonnegativity of $M^{k,n}$ for any $n$, $k=1,2$, see Ref. \cite{AchdouCapuzzo}. 
\\
We briefly describe the iterative method used in order to solve (\ref{eq:1})-(\ref{eq:2}).
Since the latter system couples  forward and backward (nonlinear) equations, 
it cannot be solved by merely marching in time. 
Assuming that the discrete Hamiltonians are $C^2$ and the coupling functions are $C^1$ 
 allows us to use a Newton-Raphson method for the whole 
system of nonlinear equations  (which can be huge if  $d\ge 2$). \\
  More precisely, we see (\ref{eq:1})-(\ref{eq:2}) as a fixed point problem.
  We first define the mapping $\Xi$ which maps the pair of grid functions 
$\left(Y^{1,n}_{i,j}, Y^{2,n}_{i,j}\right)_{i,j,n}$  
 to the pair of grid function $\left( (V^1[M^{1,n}, M^{2,n}])_{i,j}\right.$, $\left.(V^2[M^{1,n}, M^{2,n}])_{i,j}     \right)_{i,j,n}
$,   where $n$ takes its values in  $\{1\dots, N\}$ and   $i,j$ take their values  in  $\{1\dots, N_h\}$,
and $(M^{1,n}_{i,j}, M^{2,n}_{i,j})$ is found by solving
 the following system of discrete Bellman and Kolmogorov equations: for any $0\le n< N$, $1< i,j< N_h$,
\begin{equation}\label{eq:3}
\left\{
  \begin{array}[c]{ll}
\ds \frac{U_{i,j}^{k,n+1}-U_{i,j}^{k,n}}{\Delta t} +  \nu (\Delta_h U^{k,n})_{i,j} - g^k(x,[D_h U^{k,n}]_{i,j}) 
 &\ds =   - Y^{k,n+1}_{i,j} ,\\
\ds\frac{M_{i,j}^{k,n+1}-M_{i,j}^{k,n}}{\Delta t} -  \nu (\Delta_h M^{k,n+1})_{i,j} - \BB_{i,j}^k(U^{k,n}, M^{k,n+1}) &= 0,
  \end{array}
\right.
\end{equation}
 supplemented with (\ref{eq:2}) and discrete Neumann conditions. Finding a fixed point of $\Xi$ is equivalent  
to solving (\ref{eq:1})-(\ref{eq:2}).\\
Note that in (\ref{eq:3}) the discrete Bellman equations do not involve $M^{k,n+1}$.
Therefore, one can first solve the Bellman equations for $U^{k,n}$ $0\le n\le N$, $k=1,2$ 
by marching backward in time (i.e. performing a backward loop with respect to the index $n$).
For every time index $n$, the two systems of nonlinear equations for $U^{k,n}$, $k=1,2$ are themselves solved by means 
of a nested Newton-Raphson method. 
 Once an approximate solution of the Bellman  equations has been found, 
one can solve the (linear) Kolmogorov equations for $M^{k,n}$ $0\le n\le N$, $k=1,2$,  
by marching forward in time (i.e. performing a forward loop with respect to the index $n$).  
The solutions of (\ref{eq:3})-(\ref{eq:2}) are such that $M^{k,n}$ are nonnegative and 
$h^2 \sum_{i,j}  M_{i,j}^{k,n}=1$  for any $n$, $k=1,2$.
\\
 The fixed point equation 
$\Xi \left( \left(Y^{1,n}_{i,j}, Y^{2,n}_{i,j}\right)_{i,j,n}\right)=\left(Y^{1,n}_{i,j}, Y^{2,n}_{i,j}\right)_{i,j,n}$ is solved numerically 
by using a Newton-Raphson method. This  requires the differentiation of both the Bellman and Kolmogorov equations in (\ref{eq:3}).
\\
A good choice of an initial guess  is important, as always for  Newton methods. 
To address this matter, we first observe that the 
above mentioned iterative  method generally quickly converges to a solution when the value of $\nu$ is large.
This leads us to use a continuation method in the variable $\nu$:
 we start solving  (\ref{eq:1})-(\ref{eq:2}) 
with a rather high value of the parameter $\nu$ (of the order of $1$),
 then gradually decrease $\nu$ down to the desired value,
 the solution found for a  value of $\nu$ being used as an initial guess
 for the iterative solution with the next and smaller value of $\nu$.

\section{Numerical simulations}\label{sec_numresults}

\subsection{Stationary  PDEs}

In this section, we will show some results obtained by implementing the long-time procedure presented in Section \ref{s:stat_approx}. Here, we choose $d = 1$, $\Omega = (0,1)$ and Hamiltonians of the form \eqref{modelH}, with $W \equiv 0$. The mesh step is $h = 1/200$; at each time step $n$ we define the approximate ergodic constant $\lambda_k^n = h (\sum_{i} U^{k,n}_i)/t_n$ and the relative errors $err^n_m = \max_{k=1,2} \|M^{k,n} -  M^{k,n-1}\|_\infty/\Delta t$, $err^n_\lambda = \max_{k=1,2} |\lambda_k^n- \lambda_k^{n-1}|$. As mentioned before, we expect that as $t_n$ grows, $\lambda_k^n$ converges to some constant value; we stop the simulation when the two relative errors become smaller than a fixed threshold, and denote by $u^k_h, m^k_h$ the approximate solutions $U^{k,n}, M^{k,n}$ respectively at the last time iteration.

The initial data are set to be (unless otherwise specified)
\[
U^{k,0} \equiv 0, \quad M^{1,0}_i = \chi_{[0, 0.5]}(x_i), \quad M^{2,0}_i = \chi_{[0.5, 1]}(x_i),
\]
while the time step is $\Delta t = 0.02$ as long as the relative error is large, namely when $err_m > 1$ (this happens during the first time iterations), and it is linearly increased to $\Delta t = 2$ as soon as the relative error $err_m$ reaches $0.001$. In our simulations, stability in the long-time regime always occurs; in Figure \ref{fig_test1_2} (right) it is shown a typical behavior of the relative errors as the number of time iterations increases.

We will show various tests with different values of $H, \nu$, and different choices of the cost functionals (see Table \ref{tests}). Note that if $\nu$ is large (say, greater than $0.1$), the constant solution only is achieved in the long-time regime, namely $M^{k,n} \to 1$ as $n$ increases; in this situation the mixing effect of the Brownian noise prevails on the individual preference of players. A richer structure of approximate solutions shows up as $\nu$ approaches zero.

\begin{table}[ht]
\centering
\caption{The data in the tests.}\label{tests}
\begin{tabular}{|C|C|C|C|C|C|}
\hline
\text{Test} & \gamma & \nu & a_1 & a_2 & \text{Couplings} \\ \hline \hline
1 & 2 & 0.05, 0.0005 & 0.3 & 0.4 & V_\ell \\ \hline
2 & 2 & 0.05 & 0.4 & 0.8 &  \overline{V}_\ell, V_\ell \\ \hline
3 & 2 & 0.001 & 0.8, 0.3 & 0.8, 0.3  & V \\ \hline
4 & 8, \frac{4}{3} & 0.005 & 0.3 & 0.3 & V_\ell \\ \hline
\hline
\end{tabular}
\end{table}

\smallskip {\bf Test 1.} Here, we obtain two monotone configurations, and observe that segregation between the two populations appears; moreover, it becomes more evident as the viscosity $\nu$ goes to zero, see Figure \ref{fig_test1}. In other words, we find two disjoint intervals $\Omega_k$, $k=1,2$ such that $m_h^k > 0$ on $\Omega_k$ and $m_h^{3-k} \to 0$ as $\nu \to 0$ on $\Omega_k$. Note that segregation occurs even if the two ``happiness'' thresholds $a_k$ are small: the cost paid by a player can be zero even if the distribution of his own population is less than half of the distribution of both the populations. The optimal feedback control $- D_h u_h^k$ vanishes on $\Omega_k$ in the small viscosity regime, because in this region the cost $V^k_\ell$ is identically zero; $- D_h u_h^k$ acts substantially only on the complement of $\Omega_k$, forcing $m_h^k$ to be close to zero.

Note that if $\nu$ is very small, the free boundary between $\Omega_1$ and $\Omega_2$ becomes a point, which varies upon the choice of $a_k$ (see also the other tests); in general, if $a_1 > a_2$ this boundary shifts closer to $x=0$ if $0 \in \Omega_1$, or to $x=1$ if $1 \in \Omega_1$: the more xenophobic population concentrates more, while the other one is distributed over a bigger subset of the domain.

The asymptotic behavior of $\int m^1 m^2 \, dx$ with respect to $\nu$ appears to be power-like, that is $\int m^1 m^2 \, dx \approx c \nu^4$ for some positive $c$, depending on the ``branch'' of solutions. For this test, numerical values can be found in Table \ref{nu_vs_int}.

We finally mention that if one changes the initial distributions $M^{1,0}, M^{2,0}$, then the approximate solution $m^k_h$ may vary; in the one dimensional case monotone configurations are likely to occur, but it is possible to obtain solutions with more than one stationary point (see Figure \ref{fig_test1_2}) by a suitable choice of $M^{k,0}$ (see also Remark \ref{rem_CV}).

\begin{figure}
\centering
\caption{\footnotesize $m_h$ (left), $u_h$ (right) at different values of $\nu$: $\nu = 0.05$ is marked with circles, $\nu = 0.0005$ is marked with triangles; solid/red lines are used for $(u_1,m_1)$, while dashed/blue lines are used for  $(u_2,m_2)$.}\label{fig_test1}
    \includegraphics[width=6cm]{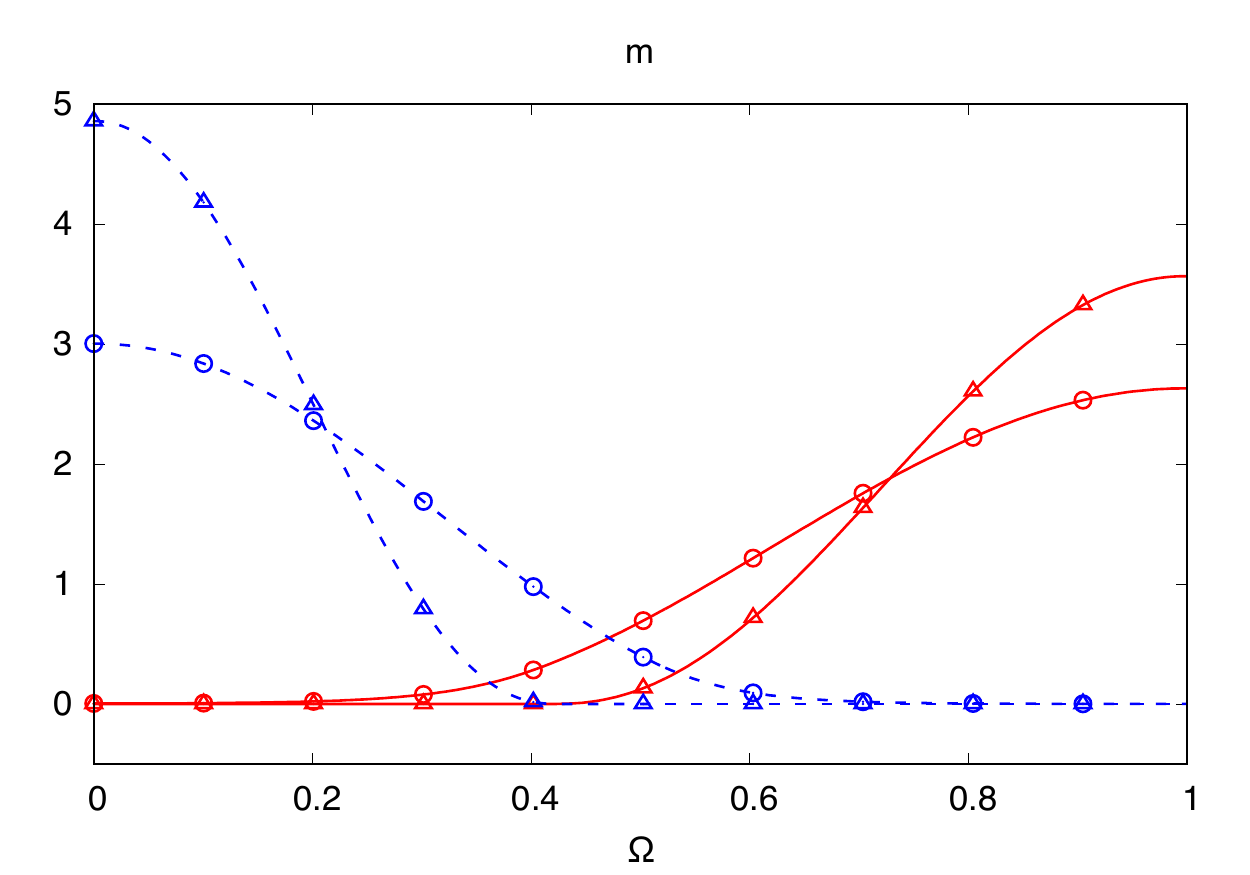}
     \includegraphics[width=6cm]{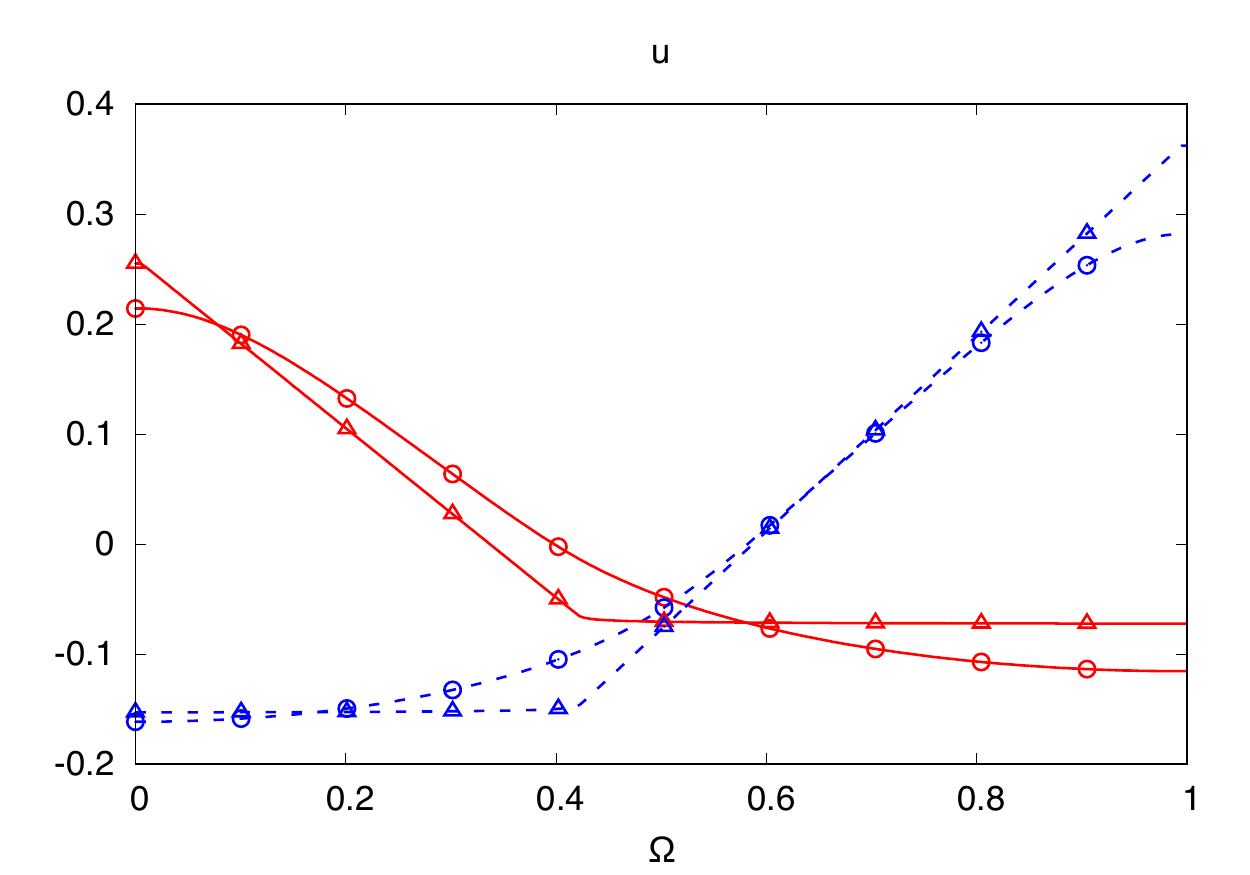}
\end{figure}

\begin{figure}
\centering
\caption{\footnotesize Another configuration, $\nu = 0.001$, with relative errors.}\label{fig_test1_2}
    \includegraphics[width=6cm]{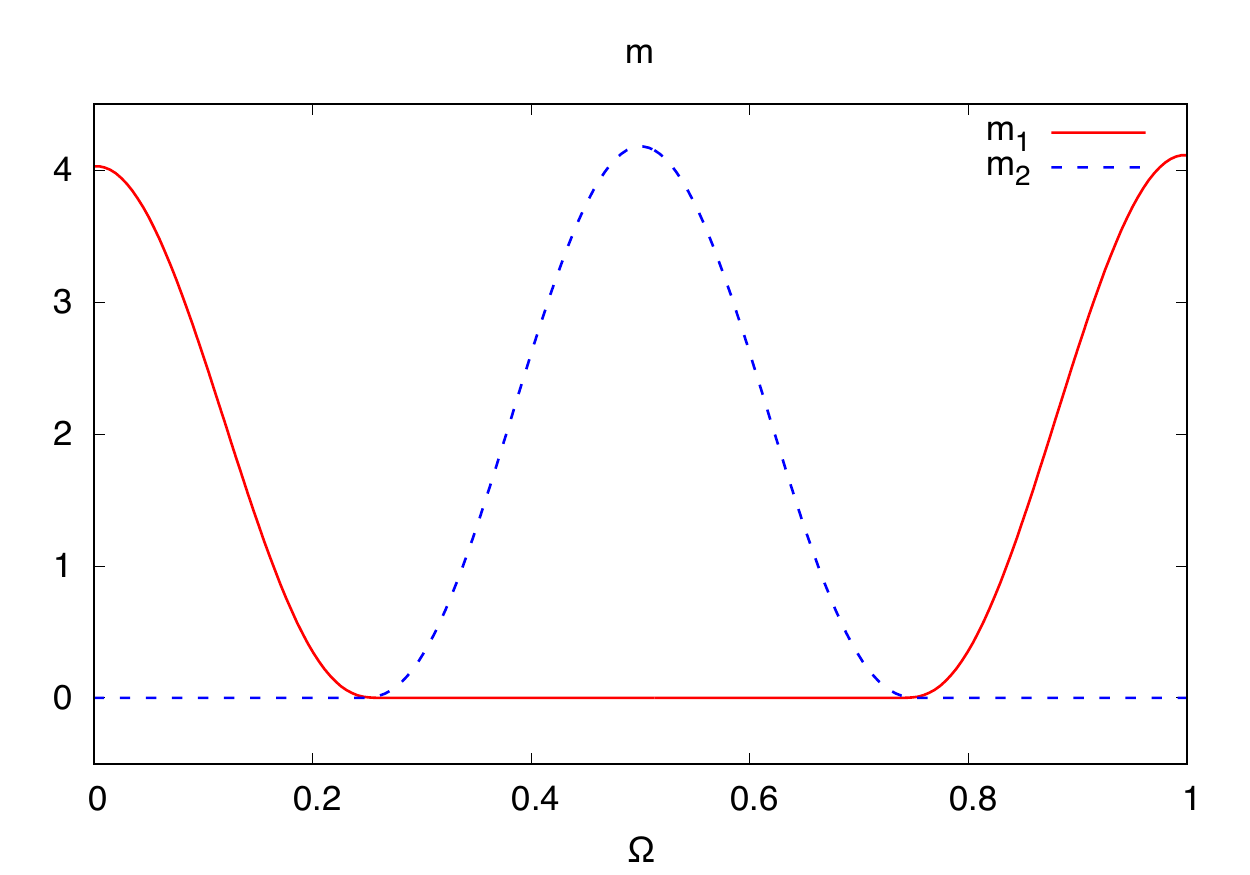}
    \includegraphics[width=6cm]{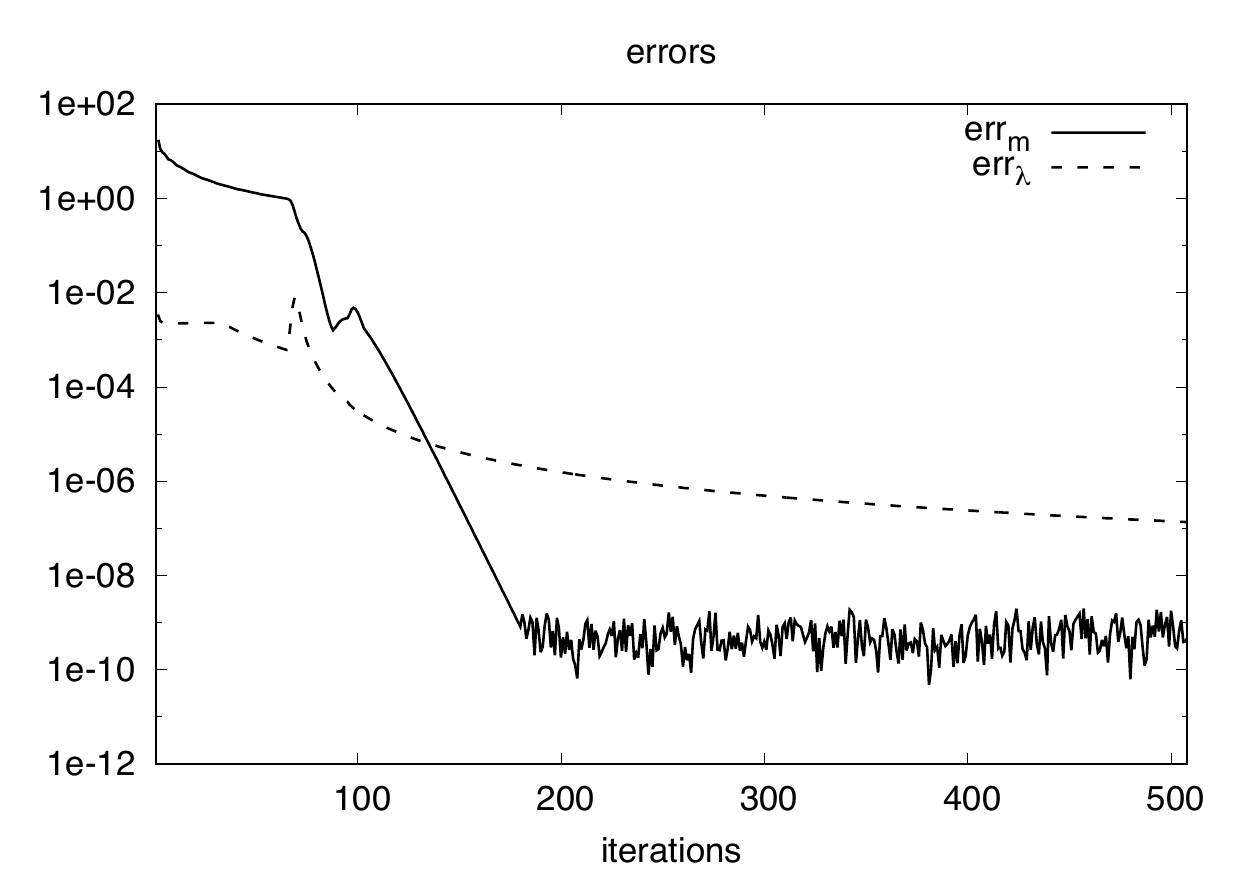}
\end{figure}

\begin{table}[ht]
\centering
\caption{The value of $\int m^1 m^2 \, dx$ versus $\nu$}\label{nu_vs_int}
\begin{tabular}{|C|C|}
\hline
\nu & h \sum_i m^1_{h,i} m^2_{h,i} \\ \hline \hline
0.05 & 0.09100195573  \\ \hline
0.01 & 0.00017126474  \\ \hline
0.005 & 0.00000811663  \\ \hline
0.0005 & 0.00000000068  \\ \hline
\hline
\end{tabular}
\end{table}

\smallskip {\bf Test 2.} In this test, we show how the ``family effect'' affects the behavior of the two populations, and compare the approximate solutions of \eqref{MFGstat} with local couplings $\overline{V}_\ell$ and $V_\ell$. In general, the presence of the family effect discourages segregation, and the two distributions appear to be a bit more ``mixed'' in this case, see Figure \ref{fig_test2}. Nevertheless, full segregation still occurs as $\nu$ approaches zero. Note that $V^k_\ell$ is positive where $m^k_h$ is close to zero, while $\overline{V}_\ell$ is proportional to $m^k_h$: what happens is that $\overline{V}_\ell$ is different from zero only in a (very) small region around the free boundary between $m_1$, $m_2$, that still is sufficient to trigger segregation if the viscosity is small.

\begin{figure}
\centering
\caption{\footnotesize The family effect. $\overline{V}_\ell$, left. $V_\ell$, right.}\label{fig_test2}
    \includegraphics[width=6cm]{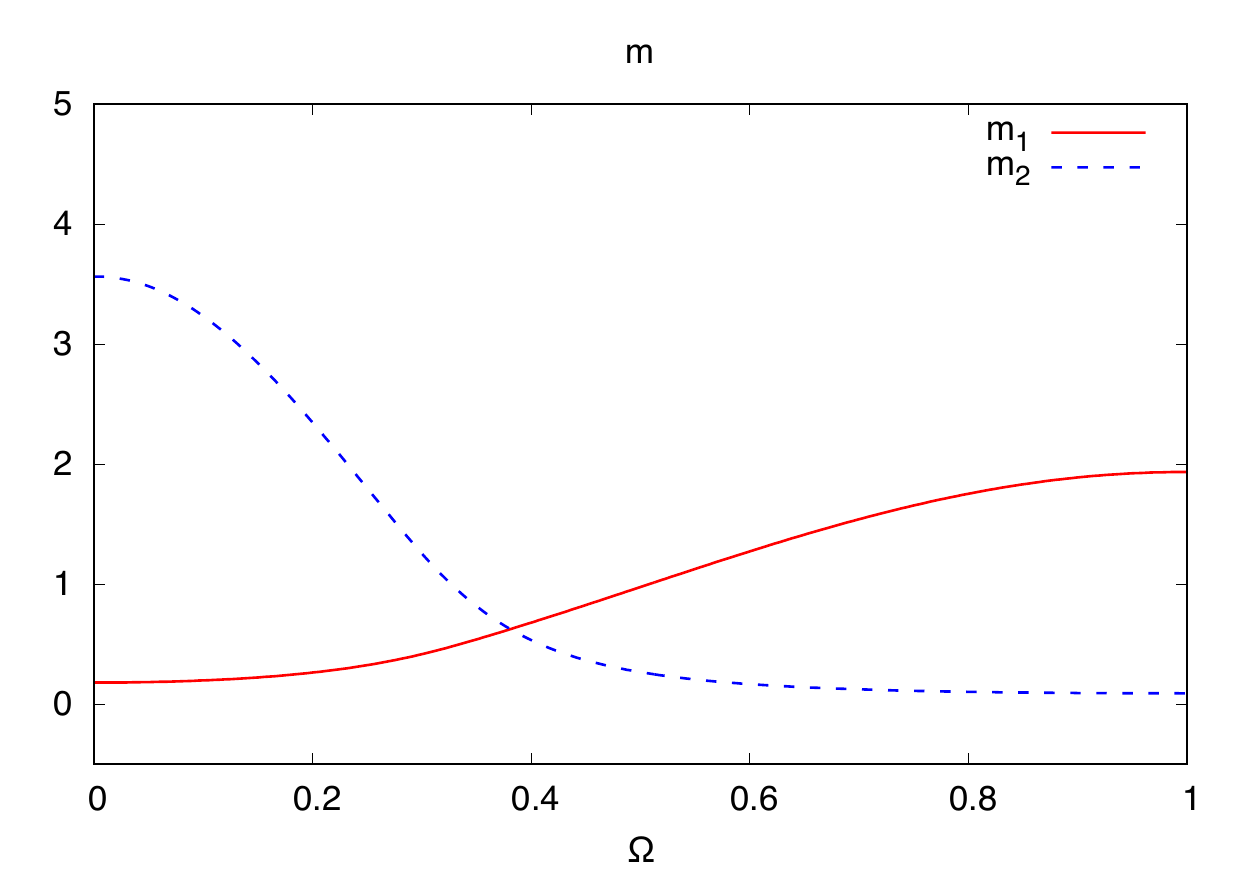}
    \includegraphics[width=6cm]{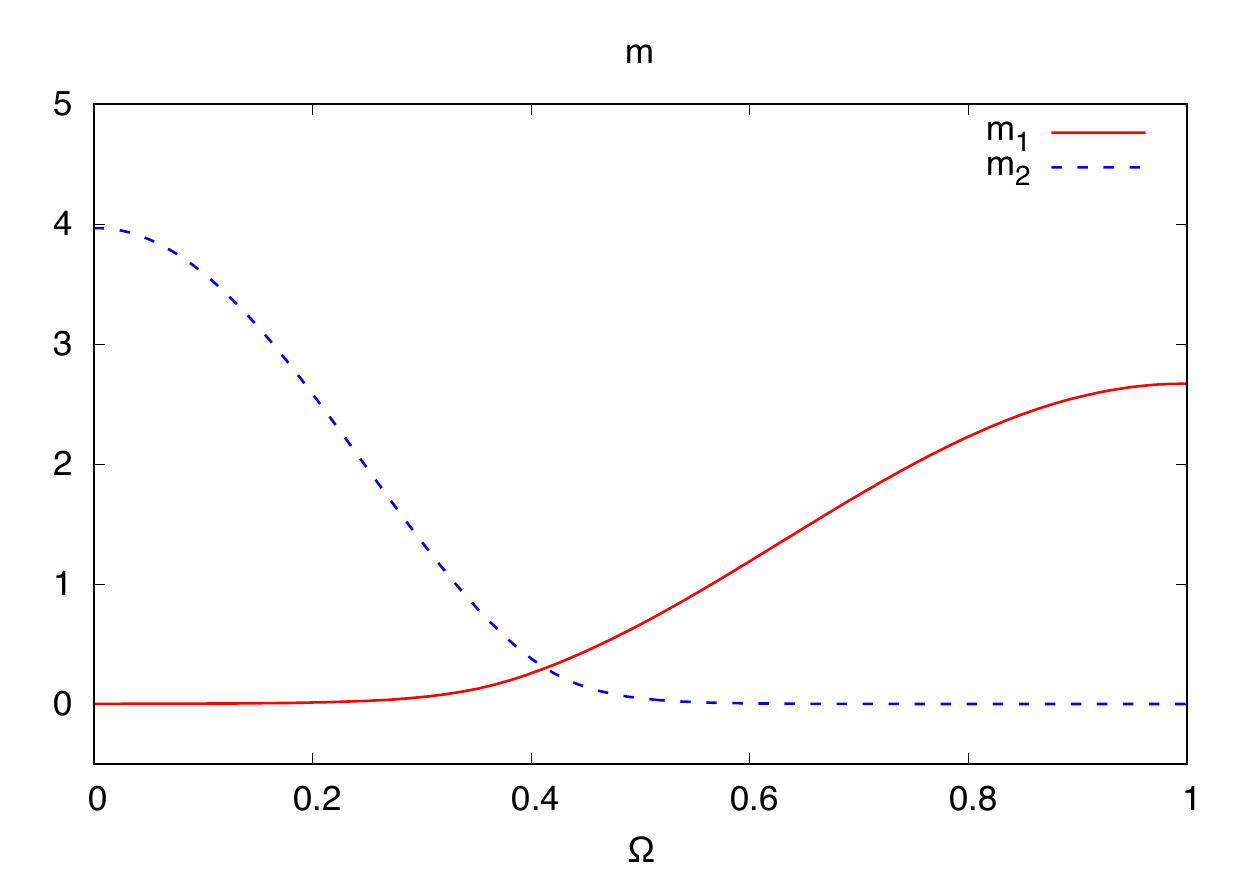}
\end{figure}

\smallskip {\bf Test 3.} In the previous tests, we used the local versions of the costs, namely we considered myopic players. Here, we show the results obtained considering the non-local versions of the cost functionals as in \eqref{Vk}, with kernel
\[
K(x,y) = \frac{1}{|[x-\delta, x+\delta] \cap \Omega|}\chi_{[x-\delta, x+\delta] \cap \Omega}(y), \quad \delta \in (0,1).
\]
In Figure \ref{fig_test3} the solutions $m_h$ are plotted; here, $\delta = 0.2$. If $a_1 = a_2 = 0.8$, players prefer regions of $\Omega$ with prevalent presence of their own population. In this case, one may observe that the set where both the distributions vanish as $\nu \to 0$ is an interval with non-empty interior; this is a consequence of the fact that the cost at position $x$ paid by a player depends on an entire neighbourhood of $x$. Nevertheless, if the happiness thresholds are sufficiently low (say, less than $0.5$, as in Figure \ref{fig_test3} (right)), the free boundary becomes a point, as in the local case $V_\ell$.

\begin{figure}
\centering
\caption{\footnotesize The non-local case: $m_1$ and $m_2$ in the subinterval $[0.35, 0.65]$. $a_1=a_2 = 0.8$, left. $a_1=a_2 = 0.3$, right.}\label{fig_test3}
    \includegraphics[width=6cm]{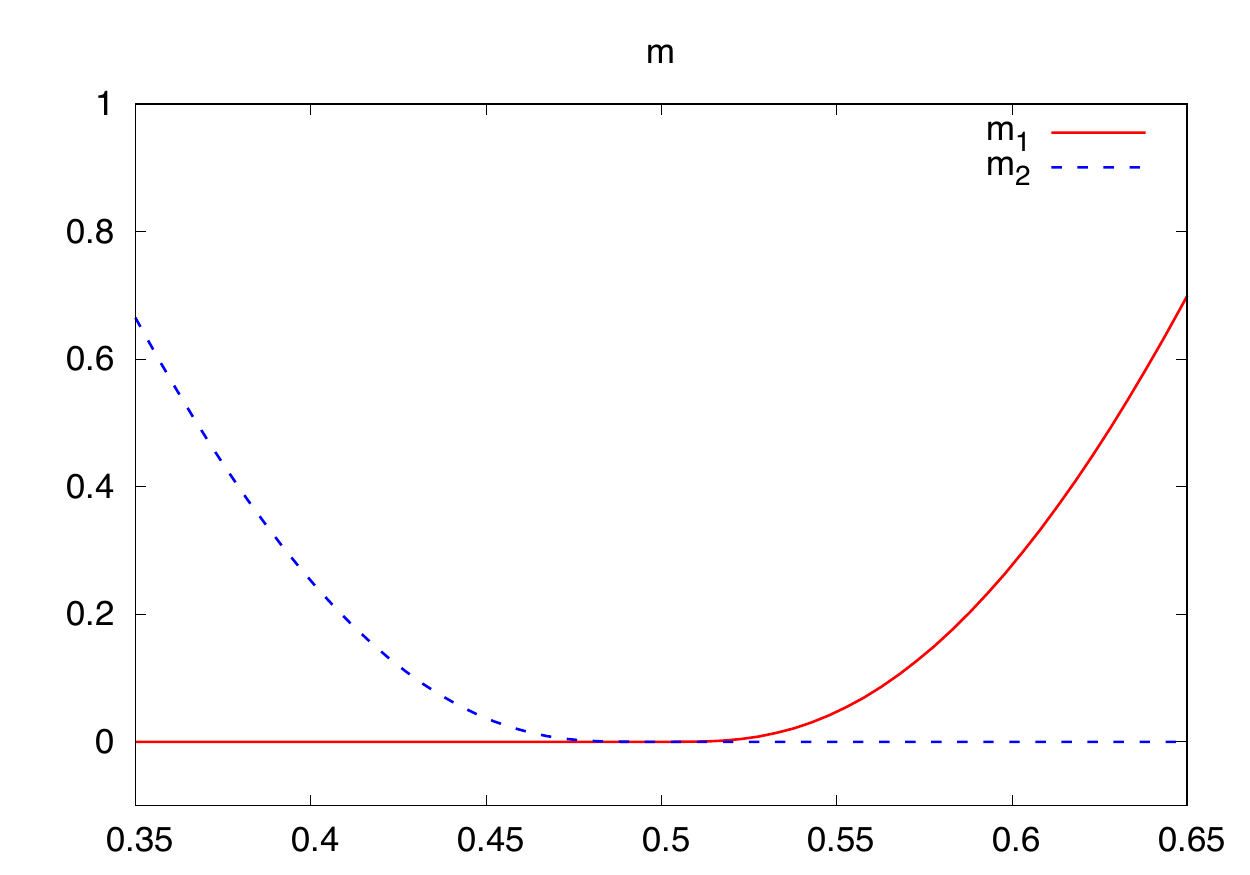}
    \includegraphics[width=6cm]{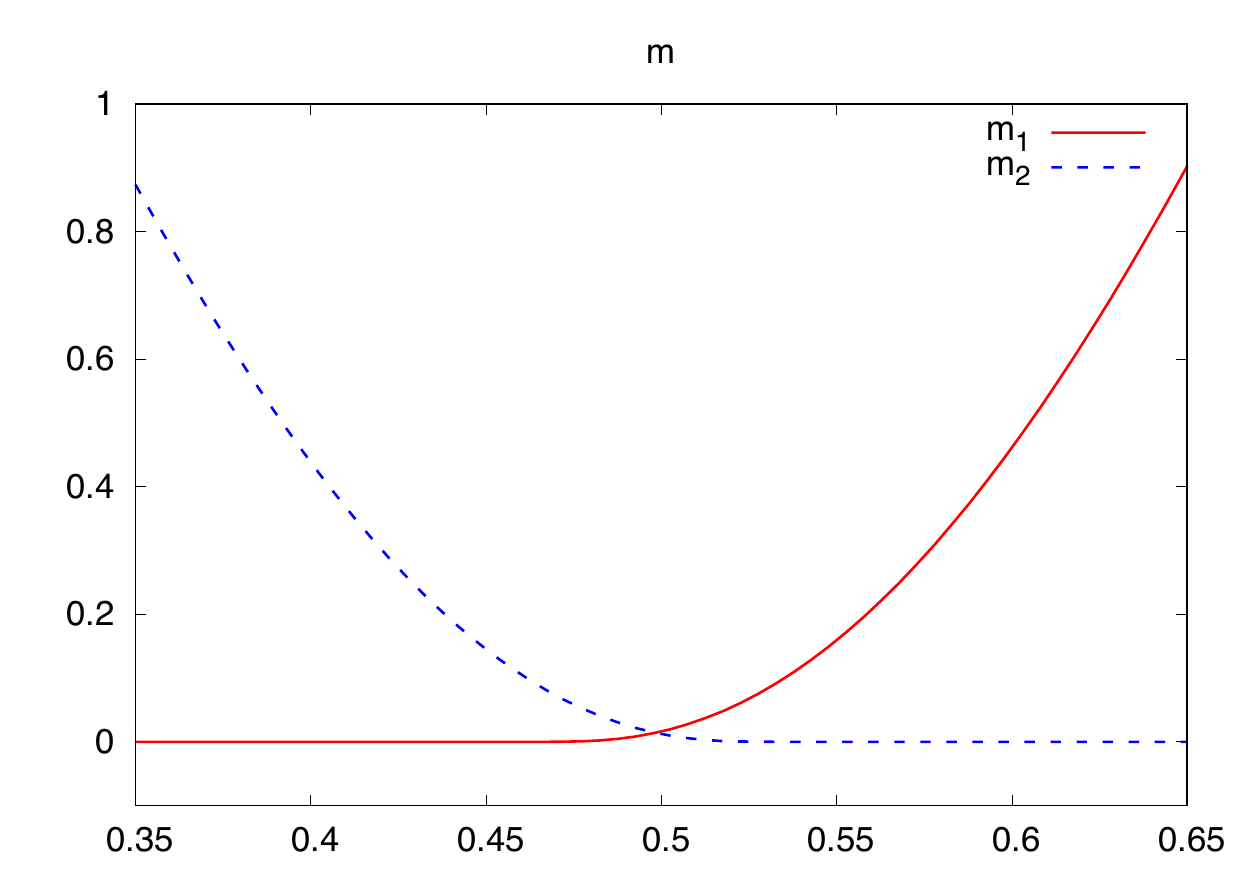}
\end{figure}

\smallskip {\bf Test 4.} In this test, we choose different parameters for the Hamiltonians. The value of $\gamma$ affects the shape of the distributions on their support, as shown in Figure \ref{fig_test4}. Still, different values of $\gamma$ produce segregation to the same extent.

\begin{figure}
\centering
\caption{\footnotesize The non-quadratic case. $\gamma = 8$, left. $\gamma = 4/3$, right.}\label{fig_test4}
    \includegraphics[width=6cm]{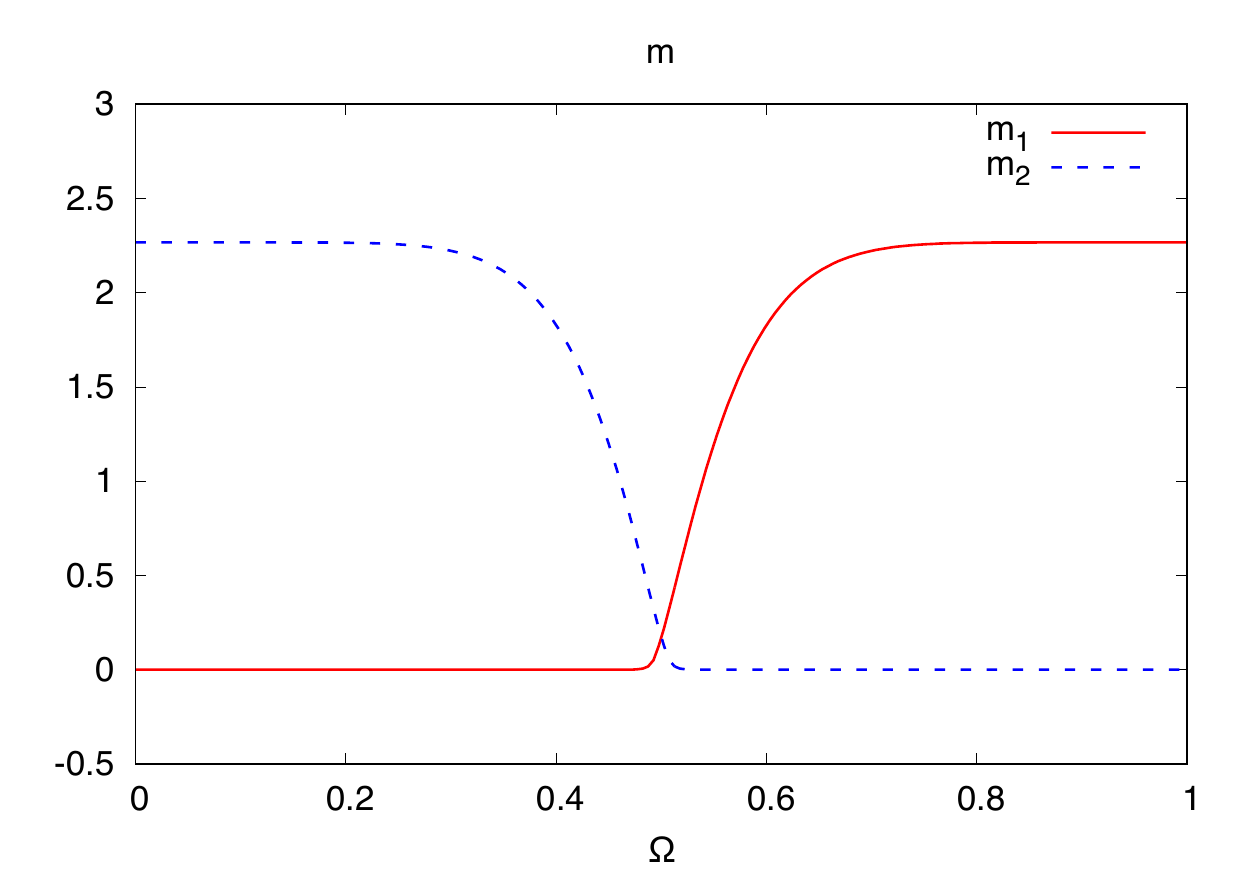}
    \includegraphics[width=6cm]{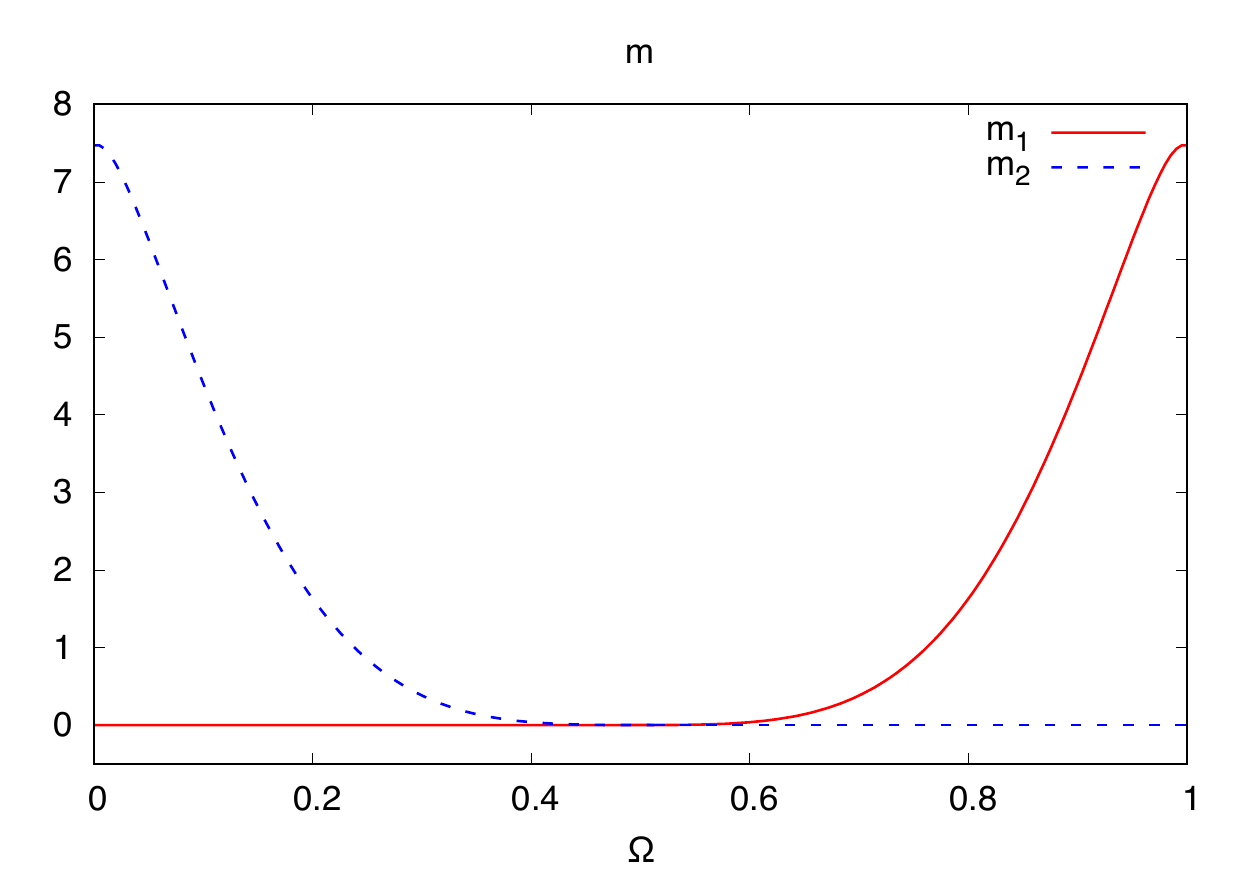}
\end{figure}

\begin{rem}\label{rem_CV}
Numerical simulations suggest the presence of a wide variety of solutions of \eqref{MFGstat} even in space dimension $d=1$. In Ref. \cite{CirantVerzini}, a similar system MFG is considered, where $\gamma = 2$ and $V^k(m_1, m_2)$ are just increasing functions of $m_{3-k}$; with respect to our models, segregation is even more encouraged, as players aim at avoiding the other population in any case. In their framework, some numerical phenomena arising here have been proven rigorously: existence of branches of solutions having one ore more critical points, and segregation as $\nu \to 0$, namely
\[
\int_\Omega m_1 m_2 \to 0.
\]
Moreover, in the vanishing viscosity limit, uniform bounds on $m$ are shown, indicating that concentration of the distribution is not likely to happen (therefore, anti-overcrowding terms in the costs as in Section \ref{family} might be unnecessary), and segregated configurations can be characterized by optimal partition problems. We believe that such features of \eqref{MFGstat} can be proven also for our Schelling models.
\end{rem}

\subsection{Evolutive   PDEs} 
Let us discuss the  numerical simulations of  some finite horizon problems.
\subsubsection{A one-dimensional case}
\label{sec:one-dimensional-case}
 Here, we choose $d = 1$, $\Omega = (-0.5,0.5)$ and the horizon $T=4$.
The parameter $\nu$ will take the two values $0.12$ and $0.045$.  
The value functions and the densities satisfy Neumann conditions at the two endpoints.
The  Hamiltonian is $H(x,p)= |p|^2$.
The terminal cost is $0$ and the coupling terms are of the form  $V^1[m_1,m_2](x)= V_\epsilon(m_1(x),m_2(x))$ and 
$V^2[m_1,m_2](x)= V_\epsilon(m_2(x),m_1(x))$, with 
\begin{displaymath}
          V_\epsilon(m, n)=
 \Psi_{-, \epsilon} \left( \frac {m} {m+n +\epsilon} - 0.7  \right)  +    \Psi_{+, \epsilon}(m+n- 8) 
\end{displaymath}
where
\begin{equation}\label{approx_pnpart}
 \Psi_{-, \epsilon} (y)=\left\{
   \begin{array}[c]{ll}
-y+ \frac \epsilon 2 (e^{\frac y \epsilon} -1)  \quad &\hbox{if } y\le 0  \\     
\frac \epsilon 2 (e^{-\frac y \epsilon} -1)  \quad &\hbox{if } y\ge 0
   \end{array}
\right.
\quad \hbox{and}\quad  \Psi_{+, \epsilon} (y)=\left\{
   \begin{array}[c]{ll}
\frac \epsilon 2 (e^{\frac y \epsilon} -1)  \quad &\hbox{if } y\le 0  \\     
y+\frac \epsilon 2 (e^{-\frac y \epsilon} -1)  \quad &\hbox{if } y\ge 0,
   \end{array}
\right.
\end{equation}
and  $\epsilon=10^{-5}$. The function $V_\epsilon$ is a regularized version of 
\begin{displaymath}
       V(m, n)= \left( \frac {m} {m+n} - 0.7  \right)^- + \left( m+n- 8\right)^+ .
\end{displaymath}
In this case,  the two populations are symmetric to each other. 
The first part of the coupling term stands for xenophobia: 
an agent located at $x$ pays a cost if at $x$, 
the proportion of agents of  its own type is less than $70\%$.
The second part models the aversion to overcrowded locations:
  an agent  located at $x$ pays a cost if the density of agents of both types at $x$ is greater than $4$. 
The initial densities are 
$m_{1,0}(x)= 3/4 + 1/2\chi_{[-1/2,-1/4]\cup [0,1/4]}(x)$ 
and $m_{2,0}(x)= 3/4 + 1/2\chi_{[-1/4,0]\cup [1/4,1/2]}(x)$.
Since the initial distributions are symmetric to each other 
and the population have symmetric characteristics, 
the distributions should remain symmetric for all times.  
\\
 The spatial grid step is $h = 1/50$ and the time step is $\Delta t =1/100$.
\\
For $\nu=0.12$, the evolution of the distributions of agents
 is displayed on Figure~\ref{fig_ya_test1}, which contains 
nine snapshots corresponding to different dates between $0$ and $T$. 
We easily see that the distributions of the two types of agents remain symmetric 
to each other. The distributions seem to keep oscillating  between two configurations
 in which the populations  are segregated and grouped in opposite sides of the domains.  
A possible explanation of this behavior may be as follows: in that rather particular situation when 
the two populations are symmetric to each other and
strongly xenophobic, a rather high level of noise makes it difficult 
to reach a global steady equilibrium.  We expect that there exists another solution 
which comes close to a steady equilibrium  for times not too close to $0$ and $T$ (see the next case with $\nu=0.045$), 
but this solution has not been selected by our numerical method.
\\
For $\nu=0.045$,  the evolution of the distributions is displayed on Figure~\ref{fig_ya_test2}.
Here again, the two distributions of agents remain symmetric to each other, but this time, we see that
the populations are very close to a steady equilibrium when $t$ is not too close to $0$ and $T$. 
The latter equilibrium is a configuration in which the two populations occupy disjoint subdomains.
\begin{figure}[htbp]
\centering
\caption{\footnotesize Evolution of $m_h$ for $\nu=0.12$: solid/red  (respectively dashed/blue) lines are used for  $m_1$, (respectively $m_2$).}
\label{fig_ya_test1}
    \includegraphics[width=14cm]{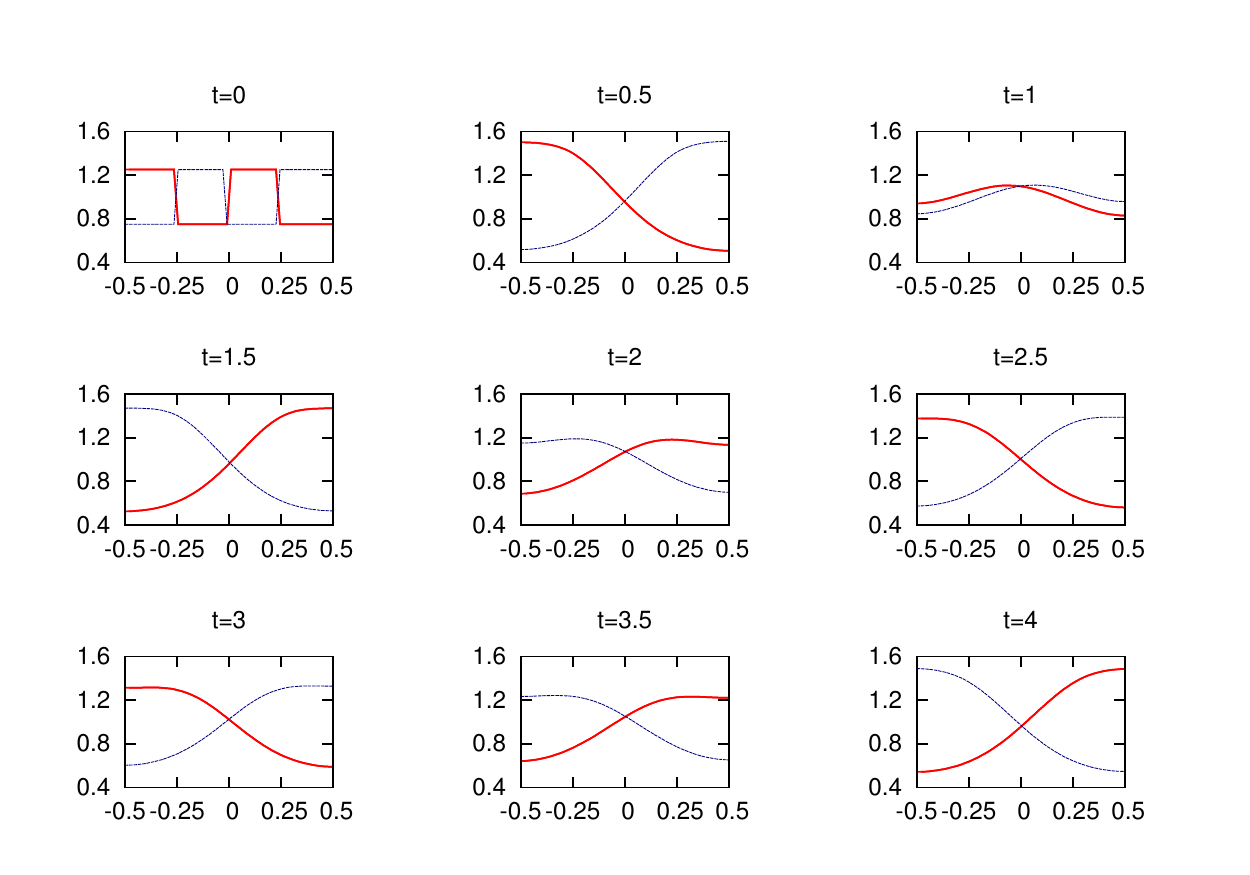}
\end{figure}

\begin{figure}[htbp]
\centering
\caption{\footnotesize Evolution of $m_h$ for $\nu=0.045$:  solid/red  (respectively dashed/blue) lines are used for  $m_1$, (respectively $m_2$).}\label{fig_ya_test2}
    \includegraphics[width=14cm]{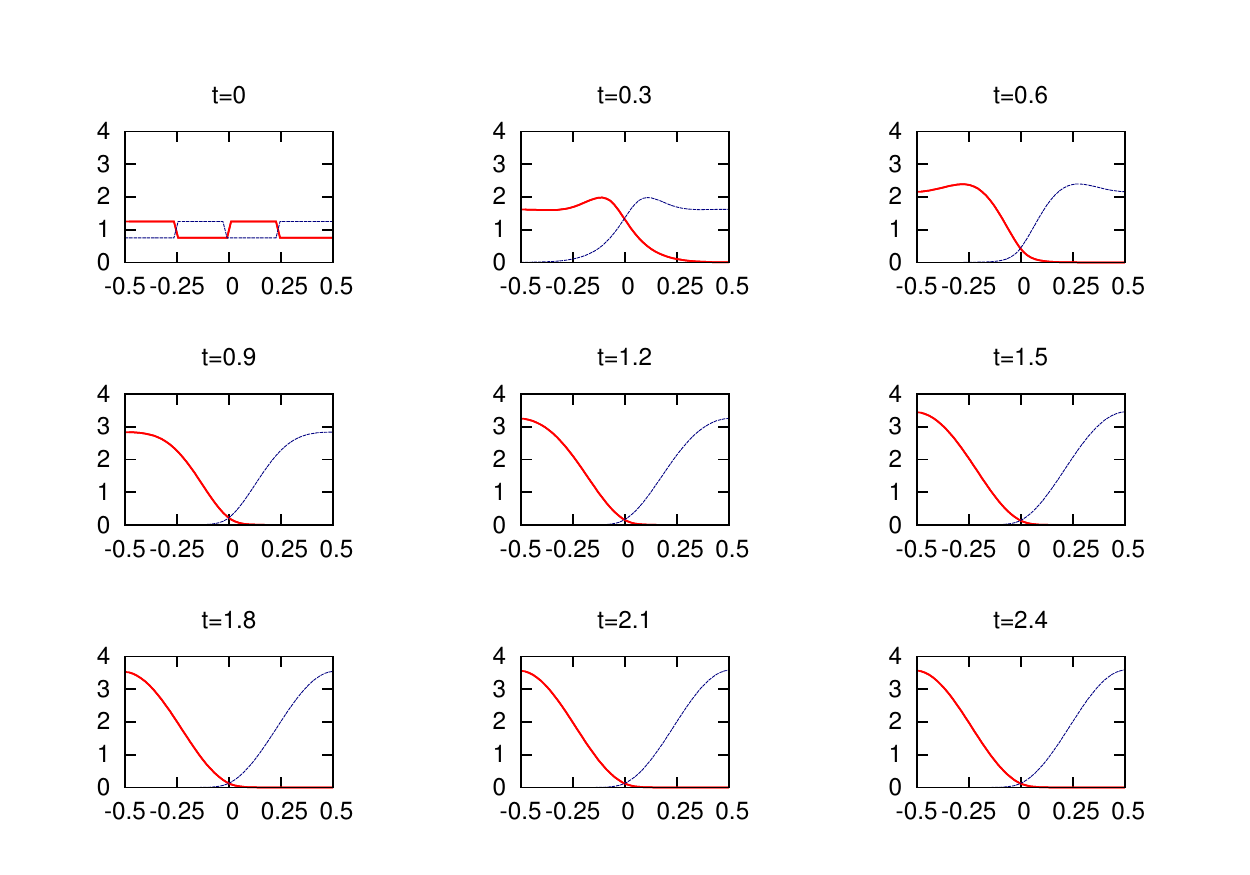}
\end{figure}

\subsubsection{Two bidimensional cases}\label{sec:two-dimensional-case}
\paragraph{Case a)}
Here the domain  $\Omega$ is obtained by removing a crossed-shaped  set from the unit square $(-0.5,0.5)^2$.
We consider two types agents bound to stay in $\overline \Omega$,  both 
with ``threshold of happiness'' $a_i$ below $1/2$.
More precisely, the model is as follows: the  Hamiltonians are $  H^1(x,p)=     H^2(x,p)=  |p|^2 $.
We take $\nu=0.038$; the value functions and the densities satisfy Neumann conditions at $\partial \Omega$.
\\
The terminal cost is $0$ and the coupling terms are of the form  
\begin{
displaymath}
  \begin{split}
    V^1_\epsilon [m_1, m_2](x)&= 2\Psi_{-, \epsilon} \left( \frac {m_1(x)} {m_1(x)+m_2(x) +\epsilon} - 0.5  \right) 
 +    \Psi_{+, \epsilon}(m_1(x)+m_2(x)- 8), \\
    V^2_\epsilon [m_1, m_2](x)&= 
 \Psi_{-, \epsilon} \left( \frac {m_1(x)} {m_1(x)+m_2(x) +\epsilon} - 0.4  \right)  +    \Psi_{+, \epsilon}(m_1(x)+m_2(x)- 8), 
  \end{split}
\end{
displaymath}
where $\Psi_{-, \epsilon}$ and $\Psi_{+, \epsilon}$ are defined in \S~\ref{sec:one-dimensional-case} and $\epsilon=10^{-5}$.
 These coupling terms are regularized versions of
\begin{displaymath}
  \begin{split}
    V^1[m_1, m_2](x)&=
 2 \left( \frac {m_1(x)} {m_1(x)+m_2(x) +\epsilon} - 0.5  \right)^- + \left( m_1(x)+m_2(x)- 8\right)^+ ,            \\
    V^2 [m_1, m_2](x)&=  \left( \frac {m_2(x)} {m_1(x)+m_2(x)+\epsilon} - 0.4  \right)^- + \left( m_1(x)+m_2(x)- 8\right)^+ .
  \end{split}
\end{displaymath}
Note that the first population is less tolerant  than the second one.\\
The agents of the first (respectively second) type are initially uniformly distributed in the top half part (right half part)  of the domain, with a density of $2$.
Therefore, in the top-right corner of the domain, the two populations are initially mixed  and 
the less tolerant  agents are in an uncomfortable state.
Moreover, the cost for staying in  that part of the domain is higher for the first population 
 of agents (by the factor $2$ multiplying the term $(...)^-$).
 \\
\begin{figure}[htbp]
\centering
\caption{\footnotesize Evolution of $m_h$ for $\nu=0.038$:  red   (respectively blue) colors are used for  $m_1$, (respectively $m_2$).}\label{fig_ya_test4}
    \includegraphics[width=13cm]{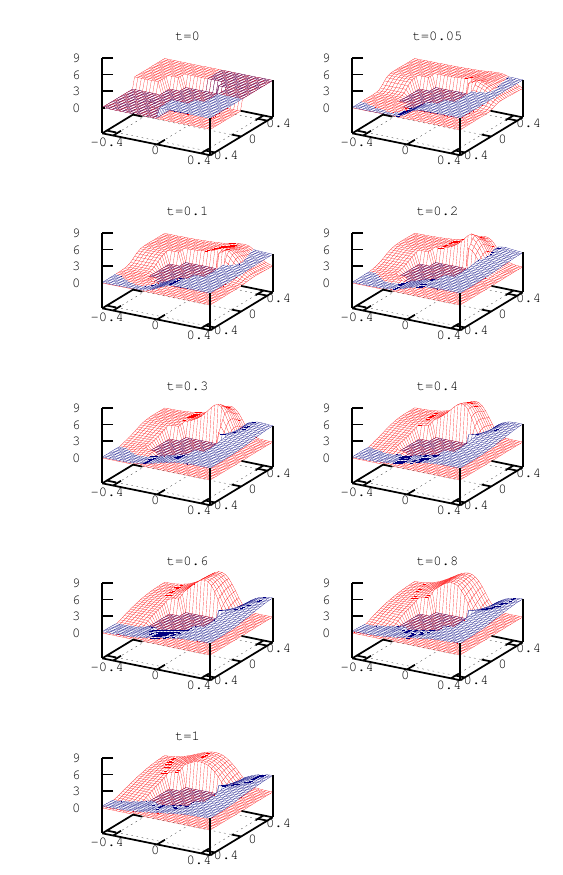}
\end{figure}
In the  simulation, the spatial grid step is $1/64$ and the time step is $1/100$. The evolution of the distributions is displayed on Figure~\ref{fig_ya_test4}:
we see that  the first population 
 leaves the top-right corner and moves toward to the top-left corner of the domain.
The second population, which is 
more tolerant, remains in the top-right corner, evolves in a slower manner, and tends to occupy a larger part of 
the domain than the first one. \\
Note the Schelling's phenomenon: segregation occurs even if both thresholds $a_1=0.5, a_2=0.4$ are not xenophobic.


\paragraph{Case b)}
Here, we consider a case when the two types of agents move in order to reach two different targets: the strategy of the agents consists of reaching the targets while avoiding
 the agents of the other population. Hence, the dynamics of the agents is not only motivated by xenophobia.\\ 
The domain  is the unit square 
 $\Omega = (0,1)^2$ and  the horizon is $T=1$.
\\
The agents of the first (respectively second) type are initially distributed in the top-left
(respectively bottom-left) corner of the domain, but are attracted 
toward the bottom-right  (respectively top-right) corner to avoid the running costs.
Therefore,  the strategy of the agents will be
obtained as a trade-off between two opposite tendencies:
on the one hand, the agents would like to quickly 
reach the opposite corner, taking paths  which cross each other, 
but on the other hand the two populations try to avoid each other.
\\
More precisely, the model is as follows: the  Hamiltonians are
\begin{displaymath}
  \begin{split}
    H^1(x,p)&=   |p|^2 -   1.4   \chi_{  [0,0.7]\times [0.2,1]}(x),\\
    H^2(x,p)&=   |p|^2 -   1.4   \chi_{  [0,0.7]\times [0,0.8]}(x)),
  \end{split}
\end{displaymath}
which means in particular that the first (respectively second) 
type of agents is attracted to the rectangle $[0.7,1]\times [0,0.2]$
  (respectively $[0.7,1]\times [0.8, 1]$).
We take $\nu=0.03$; the value functions and the densities satisfy Neumann conditions at $\partial \Omega$.
\\
The terminal cost is $0$. The coupling terms are 
 \begin{displaymath}
   \begin{split}
     V^1[m_1, m_2](x)&=
  2 \left( \frac {m_1(x)} {m_1(x)+m_2(x) +\epsilon} - 0.8  \right)^- + \left( m_1(x)+m_2(x)- 8\right)^+ ,            \\
     V^2 [m_1, m_2](x)&=  \left( \frac {m_2(x)} {m_1(x)+m_2(x)+\epsilon} - 0.6  \right)^- + \left( m_1(x)+m_2(x)- 8\right)^+ .
   \end{split}
 \end{displaymath}
The first population is more xenophobic than the second one.
The initial distributions of the agents are given by
\begin{displaymath}
  \begin{split}
    m_{1,0}(x)&= 4  \chi_{(0,0,2)\times (0.6,1)}+ 0.02,\\
    m_{2,0}(x)&= 4  \chi_{(0,0,2)\times (0,0.4)}+ 0.02.
  \end{split}
\end{displaymath}
In the  simulation, the spatial grid step is $1/64$ and the time step is $1/100$.\\
The evolution of the distributions is displayed on Figure~\ref{fig_ya_test3}:
we see that in the beginning  (before $t=0.2$), a significant part of the 
 first population (the more xenophobic agents) quickly moves 
 to the opposite corner:  even if those agents pay an important cost for quickly moving  
to the opposite corner, this cost is compensated by   their quickly reaching a location 
where there are no agents of type 2. By contrast, for $t\le 0.2$ 
the second population is more uniformly distributed. At time $t=0.2$, the first population is split 
into  two groups: the first group has almost reached the desired corner,
 whereas the second group has not moved.
Next, for $0.2\le t\le 0.6$, this latter  group of agents of the  
 first type still does not move,  while the whole second population moves to its favorite corner,
 occupying the center of the domain. Indeed, since the density of the
 agents of the second type in the middle of the domain 
has become too important, the agents of the first type prefer waiting rather than 
meeting them.  
 At $t=0.6$, most of the second population has reached the desired corner,
 and the first population can finish crossing the domain.
 
\begin{figure}[htbp]
\centering
\caption{\footnotesize Evolution of $m_h$ for $\nu=0.03$:  red   (respectively blue) colors are used for  $m_1$, (respectively $m_2$).}\label{fig_ya_test3}
    \includegraphics[width=13cm]{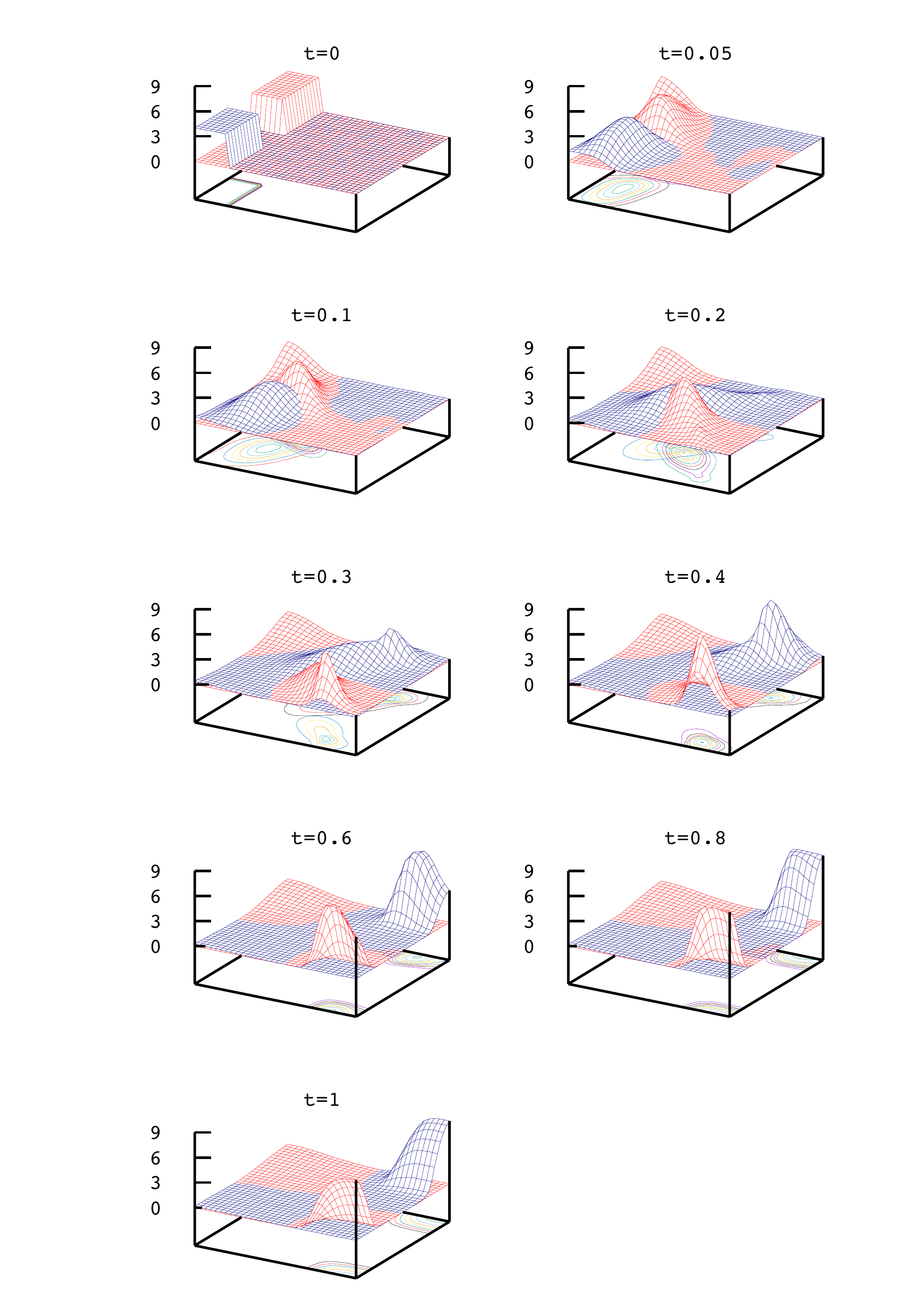}
\end{figure}


\subsubsection*{\bf Acknowledgements}
  The first author  was partially funded  by the ANR 
projects ANR-12-MONU-0013 and ANR-12-BS01-0008-01. The second author is partially supported by the research project of the University of Padova ``Mean-Field Games and Nonlinear PDEs''.
The second and third authors  are members of the Gruppo Nazionale per l'Analisi Matematica, la Probabilit{\`a} e le loro Applicazioni (GNAMPA) of the Istituto Nazionale di Alta Matematica (INdAM), and are partially supported by the GNAMPA project ``Fenomeni di segregazione in sistemi stazionari di tipo Mean Field Games a pi\`u popolazioni''.

\bibliographystyle{plain}

\medskip
\begin{flushright}

\noindent \verb"achdou@math.jussieu.fr"\\
UFR Math{\'e}matiques, Universit{\'e} Paris Diderot, \\
Case 7012, 75251 Paris Cedex 05, France, and\\
Laboratoire Jacques-Louis Lions, \\
Universit{\'e} Paris 6, 75252 Paris Cedex 05

\smallskip

\noindent \verb"bardi@math.unipd.it"\\
Dipartimento di Matematica, Universit{\`a} di Padova\\
via Trieste, 63, I-35121 Padova, Italy

\smallskip

\noindent \verb"marco.cirant@unimi.it"\\
Dipartimento di Matematica, Universit\`a di Milano\\
via Cesare Saldini 50, 20133 Milano, Italy

\end{flushright}

\end{document}